\newtheorem{theorem}{Theorem}
\newtheorem{lemma}{Lemma}
\newtheorem{remark}{Remark}
\newtheorem{proposition}[theorem]{Proposition}
\newtheorem{corollary}[theorem]{Corollary}
\newcommand{\R}{\mathbb{R}}
\newcommand{\RN}{\mathbb{R}^N}
\newcommand{\locLip}{W^{1,\infty}_{\textit{loc}}}
\title{Large-time behavior of unbounded solutions of viscous Hamilton-Jacobi Equations in $\mathbb{R}^N$.}
\author{Guy Barles, Alexander Quaas and Andrei Rodr\'iguez\footnote{Corresponding author.}}
\begin{document}

\maketitle

\begin{abstract} We study the large-time behavior of bounded from below solutions of
parabolic viscous Hamilton-Jacobi Equations in the whole space $\mathbb{R}^N$ in the case of superquadratic Hamiltonians.
Existence and uniqueness of such solutions are shown in a very general framework, namely when the source term and the initial
data are only bounded from below with an arbitrary growth at infinity. Our main result is that these solutions have an ergodic behavior
when $t\to +\infty$, i.e., they behave like $\lambda^*t + \phi(x)$ where $\lambda^*$ is the maximal ergodic constant and $\phi$ is a
solution of the associated ergodic problem. The main originality of this result comes from the generality of the data: in particular, the initial data
may have a completely different growth at infinity from those of the solution of the ergodic problem.
\end{abstract}

\vspace*{1em}
\noindent\textbf{Keywords:} Hamilton-Jacobi equations, viscous Hamilton-Jacobi equation, unbounded solutions, large-time behavior, ergodic behavior, viscosity solutions.\\

\noindent\textbf{MSC (2010):} 35B40, 35B51, 35D40, 35K15, 35K55.

%\tableofcontents

\section{Introduction and main results}

In this article, we consider the parabolic viscous Hamilton-Jacobi Equation
\begin{align}
u_t - \Delta u + |Du|^m = f(x) &{}\quad\textrm{in } \RN\times (0,+\infty), \label{vhj_whole}\\
u(x, 0) = u_0(x) &{}\quad\textrm{in } \RN,\label{initialData_whole}
\end{align}
in the case when $m>2$ and $f, u_0 \in C(\RN)$ are bounded from below. Further assumptions on $f$ are 
stated as required. We immediately point out that, by changing $u(x,t)$ in $u(x,t)+C_1t+C_2$ for $C_1,C_2>0$, we change $f$ in $f+C_1$ and $u_0$ in $u_0+C_2$ and therefore we may assume without loss of generality that $f$ is larger than any constant and $u_0$ is nonnegative, two properties that we 
use later on.

Our main interest is the large-time behavior of the solutions of \eqref{vhj_whole}-\eqref{initialData_whole}
but the first question concerns the existence and uniqueness of bounded from below solutions. Of course, 
the difficulty comes from the very general framework we wish to handle, namely the case when $f$ and $u_0$
are bounded from below with an arbitrary growth at infinity. We do not want to enter into details in this introduction
but we just point out that we use in a key way approximations by problems set on bounded domains, and in particular
on the state-constraint problem (cf. \cite{tabet2010large}) together with various regularity results and a comparison result,
i.e. a Maximum Principle type result, in this general class of solutions.

Coming back to the large-time behavior, there is a vast literature for nonlinear parabolic equations, to the extent that it is
practically impossible to list all relevant works. We point out anyway that, for such viscous Hamilton-Jacobi equations
set in the whole space $\RN$, the non-compactness of the domain and the generality of the data are well-known difficulties;
on the contrary, in the periodic case, the methods introduced in \cite{BS} provide rather general answers for a large class of
equations. On the other hand, problems set in bounded
domains with Dirichlet boundary conditions create different type of difficulties, mainly connected to the fact that the ``natural'' associated
stationary problems may have no solution. This is the reason why the ergodic constant and the ergodic problem come out. We refer to \cite{tabet2010large} which completely solves this problem in the superquadratic case (see also \cite{barles2010large} for the subquadratic one).

We briefly recall the results of \cite{tabet2010large} in the superquadratic case since it shares some similarities with our problem.
There are two possibilities for the large time behavior of solutions
of the Dirichlet problem in $\Omega$ with a Dirichlet boundary condition $g$: if the corresponding stationary equation
\begin{equation}\label{steadysourceplus}
\left\{\begin{array}{ll}
-\Delta v + |Dv|^m = f(x) &\quad\textrm{ in } \Omega\\
v = g &\quad\textrm{ on } \partial\Omega
\end{array}\right.
\end{equation}
has a bounded subsolution, then there exists a solution $u_\infty$ of \eqref{steadysourceplus} and $u(x,t)\rightarrow u_\infty$ on $\overline{\Omega}$. If \eqref{steadysourceplus} fails to have bounded subsolutions, one must introduce the \emph{ergodic problem} with state-constraint boundary conditions
\begin{equation}\label{ergodic_bounded}
\left\{\begin{array}{ll}
-\Delta v + |Dv|^m = f(x) + c &\quad\textrm{ in } \Omega,\\
-\Delta v + |Dv|^m \geq  f(x) + c &\quad\textrm{ in } \partial\Omega.
\end{array}\right.
\end{equation}
Here $c\in\mathbb{R}$ is called \emph{ergodic constant}, and is an unknown in problem \eqref{ergodic_bounded}, as is $v$. Existence and uniqueness of solutions $(c,v)$ of \eqref{ergodic_bounded} are studied in \cite{lasry1989nonlinear}: $c\in\mathbb{R}$ is unique while $v\in C(\overline{\Omega})$ is unique up to an additive constant. Convergence of $u(x,t) + ct$ to $v$ where $(c,v)$ is a solution of \eqref{ergodic_bounded}, is then obtained.

The behavior of solutions to \eqref{vhj_whole} in the subquadratic case, $m \leq 2$, studied in \cite{barles2010large}, is more complicated, as it depends now on whether $1<m\leq\frac{3}{2}$ or $\frac{3}{2}<m\leq2$. It also becomes necessary to introduce the following problem, also studied in \cite{lasry1989nonlinear}, as an analogue of \eqref{steadysourceplus} and \eqref{ergodic_bounded}
\begin{equation}\label{stateconstraints}
\left\{\begin{array}{ll}
-\Delta v + |Dv|^p = f(x) + c &\quad\textrm{ in } \Omega,\\
v(x)\rightarrow+\infty &\quad\textrm{ as } x\rightarrow\partial\Omega.
\end{array}\right.
\end{equation} % SOLUTIONS ARE NOW NECESSARILY UNBOUNDED
Both \cite{barles2010large} and \cite{tabet2010large} contain a complete study of \eqref{ergodic_bounded} and \eqref{stateconstraints}
in the context of viscosity solutions.

The main difficulty in our study with respect to the previously cited works is the fact that the domain is unbounded, and that under our
assumptions for $f$ and $u_0$, the solutions of \eqref{vhj_whole}-\eqref{initialData_whole} may have any growth when $|x| \to \infty$.

The work \cite{ichihara2012large} addresses the problem of large-time behavior of unbounded solutions of 
\eqref{vhj_whole}-\eqref{initialData_whole} mainly with probabilistic techniques with $x$-dependent Hamiltonians. It is based on explicit
representation formulas for the solution of \eqref{vhj_whole}-\eqref{initialData_whole} which comes from the underlying stochastic
optimal control problem (see,  e.g. \cite{bardi2008optimal}, \cite{fleming2006controlled} for standard references on this topic). To the best of our 
knowledge, \cite{barles2016unbounded} and the present work are the first to address the problem exclusively within the framework of viscosity solutions 
and  PDE techniques. We also point out that our result holds in greater generality with respect to the data. Further comments on this are provided after the statement of our main result.

A number of works have addressed the problem set in the whole space (see e.g. \cite{barles2001space}, \cite{benachour2004asymptotic}, \cite{biler2004asymptotic}, \cite{gallay2007asymptotic}, \cite{iagar2010asymptotic}, \cite{laurenccot2009non}, and the works cited therein), while assuming some restriction on the behavior of the initial data at infinity, which include integrability or decay conditions, periodicity or compact support.

Concerning unbounded solutions on the whole space for \emph{first-order} equations, we refer the reader to the works \cite{barles2006ergodic}, \cite{ichihara2009long}, \cite{ishii2008asymptotic}, and especially to the review \cite{ishii2009asymptotic}, to name a few. We also mention a recent result for first-order Hamilton-Jacobi equations, \cite{barles2017large}, in a situation which is similar to ours, even if our results are obtained by an completely
different approach: under the standard assumptions on the Hamiltonian, such as coercivity and locally Lipschitz continuity, the expected convergence result holds if the initial data is bounded from below. It is also shown in \cite{barles2017large} that in certain special cases, the expected large-time behavior occurs for initial data no longer bounded from below, but fails in general. See \cite{barles2017large} for details.

Finally, we mention the recent work \cite{arapostathis2019uniqueness} which---although dealing with a stationary version of \eqref{vhj_whole}, and in the subquadratic case---also builds upon some of the ideas of \cite{barles2016unbounded}, and thus bears some relation to our work.

% RESULTS

\subsection{Main Results}

We begin by addressing the well-posedness of problem \eqref{vhj_whole}-\eqref{initialData_whole}.
\begin{theorem}\label{thm_Soln}
Assume $f\in \locLip(\RN)$ and $u_0\in C(\RN)$ are bounded from below. Then, there exists a unique continuous solution
of \eqref{vhj_whole}-\eqref{initialData_whole}.
\end{theorem}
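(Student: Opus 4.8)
The plan is to obtain the solution as a monotone limit of solutions to approximate problems posed on bounded domains, exploiting the normalization $f \geq 0$, $u_0 \geq 0$ announced in the introduction. First I would fix an exhaustion of $\RN$ by balls $B_R$ and, for each $R$, solve the Dirichlet--state-constraint problem for \eqref{vhj_whole} on $B_R\times(0,+\infty)$ with initial data $u_0$, using the results recalled from \cite{tabet2010large} and \cite{lasry1989nonlinear}; call the solution $u_R$. The superquadratic structure $m>2$ is what makes this possible: the gradient term $|Du|^m$ controls the solution near $\partial B_R$ via the usual barrier estimates of Lasry--Lions type (namely $|Du| \leq C\,\mathrm{dist}(x,\partial B_R)^{-1/(m-1)}$), so that finite boundary values or state-constraint conditions are genuinely attained and the $u_R$ are well defined and continuous up to $\overline{B_R}$. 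I would then establish, via the comparison principle on bounded domains, that $R\mapsto u_R$ is nondecreasing (a larger domain gives a larger state-constrained solution), and that each $u_R$ is bounded below by $\inf u_0$ minus, possibly, a linear-in-$t$ term coming from $\inf f$—which, after the normalization, is simply $\inf u_0 \geq 0$.

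The next step is to pass to the limit $R\to+\infty$. Monotonicity gives a pointwise limit $u(x,t) := \lim_{R\to\infty} u_R(x,t) \in (-\infty,+\infty]$; the crucial point is to show $u$ is finite and continuous. Local finiteness should follow from an upper barrier: since $f \in \locLip$, on any fixed ball one can construct a supersolution of \eqref{vhj_whole} (for instance of the form $a|x|^2 + b(t) $ with $b$ chosen to dominate the local oscillation of $f$), and comparison on $B_R$ for all large $R$ forces $u_R \leq$ this barrier uniformly in $R$. Once local uniform bounds are in hand, the interior gradient bound (again a consequence of superquadraticity, independent of the domain) plus standard parabolic regularity give local equicontinuity of the family $\{u_R\}$, so the convergence is locally uniform and $u$ is a continuous viscosity solution of \eqref{vhj_whole}; attainment of the initial data $u(\cdot,0)=u_0$ is checked by sandwiching between the sub/supersolution barriers which agree with $u_0$ at $t=0$.

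For uniqueness I would invoke (or prove) a comparison result in the class of solutions bounded from below. Given two solutions $u,v$ bounded from below, the difference $w=u-v$ is bounded from below; the obstruction is that $w$ need not be bounded from above, so the classical doubling-of-variables argument does not close directly. The standard remedy here is to use the gradient bound to localize: the superquadratic coercivity forces any solution bounded from below to satisfy an a priori growth-rate ceiling on $|Du|$, hence a controlled growth of $u$ itself, and one runs the comparison argument with a penalization term $\varepsilon\,\psi(x)$ for a suitable slowly growing $\psi$, sending $\varepsilon\to 0$ at the end.

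The main obstacle I anticipate is precisely this comparison/uniqueness step: reconciling the arbitrary growth at infinity of $f$ and $u_0$ with a working maximum-principle argument. The existence half is comparatively soft once the domain-approximation scheme and the superquadratic gradient estimates are set up, but the uniqueness requires a genuinely new comparison principle valid for solutions that may grow arbitrarily fast—this is where the ``various regularity results and a comparison result in this general class'' mentioned in the introduction must do the heavy lifting, and I would expect the bulk of the technical work (careful choice of test functions, penalization, and exploitation of the $|Du|^m$ term with $m>2$ to absorb error terms) to live there.
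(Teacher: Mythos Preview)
Your existence sketch is broadly in the spirit of the paper, but two points are off. First, the monotonicity goes the other way: the state-constraint solution on $B_R$ is the \emph{maximal} subsolution there, so enlarging the domain gives a \emph{smaller} solution, i.e.\ $u^{R'}\le u^R$ on $\overline{B_R}$ for $R'\ge R$. This is not cosmetic: the paper uses precisely this decrease to get the local upper bound for free, via $0\le u^R\le u^{2\bar R+1}$ on $B_{2\bar R}$, without needing an explicit barrier. Your proposed barrier $a|x|^2+b(t)$ would in any case be delicate to justify as a supersolution against an $f$ with arbitrary growth. The paper also first treats $u_0\in W^{1,\infty}_{\mathrm{loc}}$ (so that the gradient/H\"older estimates hold up to $t=0$) and only afterwards passes to $u_0\in C(\RN)$ by a Cauchy-sequence argument using comparison; this step is missing from your outline.

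The real gap is in the uniqueness half. Your plan relies on an ``a priori growth-rate ceiling on $|Du|$, hence a controlled growth of $u$'' and a penalization $\varepsilon\psi(x)$ with slowly growing $\psi$. But the interior gradient bound depends on $\sup_{B_R}|f|$ and $\sup_{B_R}|Df|$, which are allowed to blow up arbitrarily fast; consequently solutions can grow arbitrarily fast at infinity (indeed Lemma~\ref{lemma_upGrowthPhi} shows $\phi$ grows like $R\,M_R^{1/m}$), and no fixed $\psi$ can absorb this. The paper bypasses growth control entirely by the change of variables $z=-e^{-u}$: since $u$ is bounded from below, $z$ is \emph{bounded}, and $z$ solves a transformed equation \eqref{eq_transformed}--\eqref{rhs_transformed} on which a standard doubling-of-variables argument works. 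The superquadraticity $m>2$ is used not to absorb a penalization error but to make $\partial_r N$ dominate $|\partial_p N|$ (inequality \eqref{dompartial}), which is what closes the argument. This exponential transform is the missing idea.
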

The proof of Theorem \ref{thm_Soln} follows from an approximation of \eqref{vhj_whole}-\eqref{initialData_whole} by problems
set in bounded domains, together with a comparison result, Theorem \ref{thm_comparison}. We point out that the comparison
result allows us to prove the existence result only in the case when $u_0 \in \locLip(\RN)$, a case in which we have better Lipschitz
estimates since they hold up to $t=0$. 

To determine the large-time behavior of solutions to \eqref{vhj_whole}-\eqref{initialData_whole} we must introduce the corresponding
\emph{ergodic problem},
\begin{equation}\label{erg_prob_intro}
\lambda - \Delta \phi + |D\phi|^m = f(x) \quad\textrm{in } \RN,
\end{equation}
where both $\lambda$ and $\phi$ are unknown. Assuming that $m>2$ and $f\in\locLip(\RN)$ is coercive and bounded from below, it is proved in \cite{barles2016unbounded} that there a exist unique constant $\lambda^*\in\R$ and a unique $\phi\in C^2(\RN)$ (up to an additive constant) which 
together solve \eqref{erg_prob_intro} (See Theorems 2.4 and 3.1 therein). We also use key ideas and computations from \cite{barles2016unbounded} in the the proofs of Lemmas \ref{lemma_upGrowthPhi} and \ref{lemma_superlinPhi}, and in Theorem~\ref{thm_comparison}.

We now state the precise hypothesis on the right-hand side required for our main result, Theorem \ref{thm_fullConv} below
\begin{enumerate}[label=(H\arabic*)]
\item\label{growth_f} There exists an increasing function $\varphi: [0, +\infty) \to [0, +\infty)$ and constants $\alpha,  \varphi_0, f_0>0$ such that for all $r\geq 0$,
\begin{equation*}
\varphi_0^{-1} r^\alpha \leq \varphi(r)
\end{equation*}
and for all $x\in \RN$ and $r=|x|$,
\begin{equation*}
f_0^{-1} \varphi(r) -f_0 \leq f(x) \leq f_0 (\varphi(r) + 1).
\end{equation*}
\end{enumerate}

\begin{remark}
The previous assumption imposes no upper bound on the growth of $f$ at infinity, since no upper bound is assumed for the function
$\varphi$. The control on $f$ on both sides by $\varphi$ implies only that $f$ is almost radial since it ``does not vary too much for
$|x|=r$, for fixed $r>0$''. It is clear that the more growth we have on $\varphi$, the
less restrictive this condition is.
\end{remark}

Assumption \ref{growth_f} is required for the construction of the sub- and supersolutions of Section~\ref{sec_subSuper}. It also implies that the solution of the ergodic problem \eqref{erg_prob_intro} has superlinear growth at infinity (Lemma~\ref{lemma_superlinPhi}), a property which plays an important role in the proof of Theorem \ref{thm_fullConv}.

% CHECK REFS TO HYPOTHESES

\begin{theorem}\label{thm_fullConv}
Assume \emph{\ref{growth_f}}. Then, for any $u_0\in C(\RN)$ bounded from below, there exists $\hat{c}\in\R$ such that the solution $u=u(x,t)$ of \eqref{vhj_whole}-\eqref{initialData_whole} satisfies
\begin{equation*}
u(x,t) - \lambda^*t \to \phi(x) + \hat{c}\quad\textrm{locally uniformly in }\RN\textrm{ as }t\to\infty.
\end{equation*}
\end{theorem}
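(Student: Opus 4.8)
The plan is to prove Theorem~\ref{thm_fullConv} by the standard ``half-relaxed limits plus strong maximum principle'' strategy, adapted to the unbounded setting via the sub/supersolutions of Section~\ref{sec_subSuper} and the superlinear growth of $\phi$ from Lemma~\ref{lemma_superlinPhi}. First, I would reduce to well-chosen initial data: by the comparison result (Theorem~\ref{thm_comparison}), if $u_0 \leq \tilde u_0$ then $u \leq \tilde u$ for all $t$, and since $u_0$ is bounded from below and $\phi$ has superlinear growth while we only need \emph{locally} uniform convergence, I would sandwich $u_0$ between $\phi(x) - C$ and $\phi(x) + $ (something large) — or, more robustly, compare the solution $u$ with initial data $u_0$ against solutions with initial data $\phi \pm C$, using that $u(\cdot,t) - \lambda^* t$ with $u(\cdot,0) = \phi$ stays within $[\phi - o(1)\,?, \ldots]$; the clean statement is that $v(x,t) := \phi(x) + \lambda^* t$ is itself the exact solution with initial datum $\phi$, so by comparison $u(x,t) \leq \phi(x) + \lambda^* t + C$ whenever $u_0 \leq \phi + C$. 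Getting a matching lower bound $u(x,t) \geq \phi(x) + \lambda^* t - g(t)$ with $g(t) \to$ constant (or at least $g(t)/t \to 0$) is the first real task, and here I would use the globally-constructed subsolutions from Section~\ref{sec_subSuper} together with assumption~\ref{growth_f} to control behavior at infinity, plus a barrier/localization argument exploiting the superquadratic coercivity to kill the influence of large $|x|$.

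Next, define the half-relaxed limits
\begin{equation*}
\overline{u}(x) = \limsup_{t\to\infty,\, y\to x} \big(u(y,t) - \lambda^* t\big), \qquad \underline{u}(x) = \liminf_{t\to\infty,\, y\to x} \big(u(y,t) - \lambda^* t\big).
\end{equation*}
The a priori bounds from the previous step show $\overline u, \underline u$ are finite and locally bounded, with $\underline u \leq \overline u \leq \phi + C'$ and $\underline u \geq \phi - C''$ on compact sets, so in particular they are genuine (finite, continuous-in-the-relaxed-sense) functions. Standard stability of viscosity solutions for the ``rescaled'' equation shows $\overline u$ is a viscosity subsolution and $\underline u$ a supersolution of the stationary ergodic equation $\lambda^* - \Delta w + |Dw|^m = f(x)$ in $\RN$ — this uses that the equation is autonomous in $t$ and that shifting $u$ in time gives solutions of the same equation, so the $\limsup$/$\liminf$ pass to the limit. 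Local gradient bounds (of the type used for Theorem~\ref{thm_Soln}, coming from the superquadratic structure) guarantee the half-relaxed limits are locally Lipschitz, hence the limits are not degenerate.

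Then I would invoke the uniqueness part of the ergodic problem. Since $\phi$ is \emph{a} solution and $\overline u$, $\underline u$ are respectively sub- and supersolutions with the \emph{same} ergodic constant $\lambda^*$, and since all of $\phi, \overline u, \underline u$ have controlled (at most $\phi + C$-type, hence superlinear but comparable) growth at infinity, the comparison principle for the ergodic problem in $\RN$ — this is where the superlinear growth of $\phi$ from Lemma~\ref{lemma_superlinPhi} and assumption~\ref{growth_f} are essential, as they make the class of solutions with that growth a uniqueness class — forces $\overline u \leq \phi + \hat c_1$ and $\phi + \hat c_2 \leq \underline u$ for suitable constants. Combined with $\underline u \leq \overline u$ this pins down $\overline u = \underline u = \phi + \hat c$, which is exactly locally uniform convergence. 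The main obstacle, I expect, is precisely establishing this comparison/uniqueness in the unbounded class: one must show that no ``extra mass'' escapes to infinity, i.e.\ that a subsolution growing no faster than $\phi$ cannot exceed $\phi + \text{const}$ — the argument should run by adding a small perturbation $\varepsilon\,\psi(x)$ with $\psi$ growing slightly faster than $\phi$ (again built from $\varphi$ via~\ref{growth_f}), localizing the max of $\overline u - \phi - \varepsilon\psi$, applying the equation there, and letting $\varepsilon \to 0$; making the penalization genuinely work in the superquadratic regime, where $|D\phi|$ itself is large, is the delicate point and is exactly where the computations imported from \cite{barles2016unbounded} (as flagged before Lemma~\ref{lemma_superlinPhi}) come in.
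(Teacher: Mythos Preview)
Your plan has a genuine gap at the decisive step. Even granting everything up to and including the identification of the half-relaxed limits --- and indeed the paper's Step~1 does show, via the elliptic Strong Maximum Principle, that $\overline u = \phi + \hat c_1$ and (symmetrically) $\underline u = \phi + \hat c_2$ \emph{exactly}, not just by inequality --- the stationary comparison/uniqueness you invoke only yields $\hat c_2 \le \hat c_1$. Uniqueness of $\phi$ up to additive constants cannot force $\hat c_1 = \hat c_2$: nothing in your outline rules out the oscillation $\underline u < \overline u$. The penalization argument you sketch (add $\varepsilon\psi$, localize the max, let $\varepsilon\to 0$) is precisely the mechanism behind the \emph{uniqueness-up-to-constants} result for the ergodic equation; it does not, and cannot, identify the constant.

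What the paper does instead is a two-stage dynamical argument that you are missing. First (Step~2), it uses the \emph{parabolic} Strong Maximum Principle on a shifted limit $w(x,t)=\lim_{n'} v(x,t+t_{n'}-1)$: since $w\le\overline u=\phi+\hat c$ everywhere and equality is touched at one interior space-time point, $w\equiv\phi+\hat c$ on $\RN\times[0,1]$, so $v(\cdot,t_{n'}-1)\to\phi+\hat c$ uniformly on compacts along a subsequence. Second (Step~3), it upgrades subsequential convergence to full convergence by a ``once close, always close'' comparison: the sub- and supersolutions $U_R$, $V_R$ of Lemma~\ref{lemma_subSuper} are rebuilt with parameter $R$ so that $V_R(x,0)$ is barely above $\phi+\hat c$ on the compact $Q_0$ and blows up on $\partial_p Q$, allowing comparison with $v(\cdot,\cdot+t_{n'}-1)$ on $Q$; as $R\to\infty$ the error vanishes. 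It is precisely for this propagation step --- not merely for the a priori local bounds --- that the special sub/supersolutions and the superlinear growth of $\phi$ are needed.
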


The proof of Theorem \ref{thm_fullConv} follows the strategy of the corresponding result on bounded domains in \cite{tabet2010large}. However, there are a number of technical difficulties due to the fact that both the solution of \eqref{vhj_whole}-\eqref{initialData_whole} and the domain are unbounded. Heuristically, this becomes apparent in the lack the control which in the bounded-domain case is provided by the time-independent boundary conditions. This is solved by the special sub- and super solutions constructed in Lemma \ref{lemma_subSuper}, which provide the required control ``at infinity''. 

We stress that this ``control at infinity'', both in the preliminary results of Section~\ref{sec_subSuper} and in the proof of Theorem \ref{thm_fullConv}, is the main contribution of the present work, since it is achieved in spite of having no upper bound on the growth of $f$ from above, and no restriction whatsoever on the behavior of $u_0$. In contrast, the results of \cite{ichihara2012large} assume that $f$ is essentially bounded on both sides by $|x|^\alpha$ for $\alpha\geq m^*$, where $m^*=\frac{m}{m-1}$, and $u_0$ has at most polynomial growth.

\subsection{Notation}\label{subsec_Notation}

Most of the notations appearing in the text are standard. Nonnegative constants whose precise value does not affect the argument are denoted collectively by $C$, and it is usually indicated if they are taken within a given range, e.g., $0<C<1$. $B_r(x)$ denotes the ball of radius $r>0$ and center $x\in\RN$, and for simplicity we write $B_r$ for $B_r(0)$.

For $Q\subset \RN\times (0,+\infty)$, we say $Q$ is \emph{open} if and only if it is open with respect to the parabolic topology in $\mathbb{R}^{N+1}$, i.e., the topology generated by the basis $\{ B_r(x) \times (-r,t] \ |  \ x\in \RN,\ t\in (0,+\infty),\ r>0 \}$ (see e.g., \cite{wang1992regularity}). The \emph{parabolic boundary} $\partial_p Q$ of such a set is the topological boundary as defined by the parabolic topology. Similarly, $\overline{Q}$ denotes the closure of $Q$ with respect to the parabolic topology in $\mathbb{R}^{N+1}$. We still write, e.g., $\overline{B}_r$ for the closure of a subset of $\RN$ with respect to the usual topology, since there is little possibility of confusion. We also write $Q_{\hat{t}} =   \{ (x,t) \in Q \ | \ t = \hat{t} \}$ for $\hat{t}>0$ and $Q_0 =  \{(x,t) \in \overline{Q} \ |  \ t = 0 \}$.
	
%	We also write $T\in (0,+\infty]$ that $T$ is allowed to be infinite, e.g., when $(0,T)$ may be taken as the interval $(0,+\infty)$. %XXX REALLY?

\subsection{Organization of the article}

In Section \ref{sec_wellPosed_vhjwhole} we prove the existence and uniqueness of solutions to \eqref{vhj_whole}-\eqref{initialData_whole}, Theorem \ref{thm_Soln}. In Section \ref{sec_subSuper} we construct special sub- and supersolutions which will serve as comparison functions towards obtaining large-time behavior. In Section \ref{sec_LT} we prove a result on the uniform boundedness of solutions to \eqref{vhj_whole}-\eqref{initialData_whole}, Lemma \ref{lemma_unifBounded}, as well as our main result, Theorem \ref{thm_fullConv}. Finally we collect in the appendix several (already known) estimates and results which we use throughout this article.

\section{Existence and uniqueness of solutions}\label{sec_wellPosed_vhjwhole}

This section is devoted to the proof of Theorem \ref{thm_Soln}, stated in the introduction. We first show it in the case when $u_0 \in \locLip(\RN)$, the general result being obtained at the end of the section, after the proof of the comparison result.

\subsection{A first existence result}

Our first task will to obtain a solution of \eqref{vhj_whole}-\eqref{initialData_whole} via an approximation by bounded domains.
\begin{proposition}\label{thm_firstExt} % REVIEW REFERENCES
Let $T>0$, and assume that $f, u_0 \in \locLip(\RN)$ are bounded from below. Then, there exists a continuous, bounded from below solution of
	\begin{align}
		u_t - \Delta u + |Du|^m = f(x) &{}\quad\textrm{in } \RN\times (0,T], \label{vhj_whole_T}\\
		u(x, 0) = u_0(x) &{}\quad\textrm{in } \RN.\label{initialData_whole_T} 
	\end{align}
\end{proposition}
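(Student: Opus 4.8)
The plan is to build the solution on $\RN \times (0,T]$ as an increasing limit of solutions to problems posed on the bounded balls $B_n$, $n \in \mathbb{N}$, and to use the bound-from-below hypotheses on $f$ and $u_0$ together with interior regularity estimates for the superquadratic equation to pass to the limit. Concretely, for each $n$ I would consider the state-constraint problem for \eqref{vhj_whole} in $B_n \times (0,T]$ with initial data $u_0$; by the superquadratic structure ($m>2$) this is the natural boundary condition, and existence and uniqueness of a continuous solution $u_n$ — continuous up to $\partial_p(B_n\times(0,T])$ in the appropriate (viscosity) sense — is available from the bounded-domain theory of \cite{tabet2010large} (one may first truncate $f$ and $u_0$ from above on $B_n$ to get a genuinely bounded problem, then remove the truncation since the solution only sees $f$ and $u_0$ on $B_n$). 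Since $u_0$ may be taken nonnegative and $f$ larger than any constant (the normalization recalled in the introduction), $u_n \geq 0$ provides a uniform lower bound, while the solution of the corresponding problem on $B_{n+1}$ is, by the comparison principle on bounded domains, a subsolution on $B_n$ once restricted — here one uses that a state-constraint solution is in particular a supersolution up to the boundary — so $\{u_n\}$ is monotone nondecreasing in $n$ on each fixed $B_k\times(0,T]$.

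The next step is to obtain a uniform \emph{upper} bound on $u_n$ on compact subsets of $\RN\times(0,T]$, so that the monotone limit is finite. For this I would construct, for each fixed ball $B_R$ and each $\delta>0$, a supersolution of \eqref{vhj_whole} on $B_{R}\times(0,T]$ that blows up as $x\to\partial B_R$ (an explicit radial barrier of the form $A + B(R^2-|x|^2)^{-\beta}$ with $\beta$ chosen from $m$ works in the superquadratic regime, since the gradient term dominates the Laplacian near the boundary), dominating $u_0$ on $B_R$ at $t=0$; comparison on the bounded domain then bounds every $u_n$ (for $n>R$) from above on $B_{R'}\times(0,T]$ for $R'<R$, uniformly in $n$. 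Combined with the lower bound, this gives local uniform boundedness of the family. Standard interior estimates for the superquadratic viscous Hamilton–Jacobi equation — local gradient bounds (e.g. of Bernstein–Lions type, as recalled in the appendix) followed by parabolic Hölder/Schauder estimates — then yield local equicontinuity, so $u := \lim_n u_n = \sup_n u_n$ is continuous on $\RN\times(0,T]$ and, by the stability of viscosity solutions under locally uniform convergence, solves \eqref{vhj_whole_T}. The lower bound passes to the limit, giving boundedness from below.

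Finally I would verify the initial condition \eqref{initialData_whole_T}: here the hypothesis $u_0\in\locLip(\RN)$ is used, as the introduction flags, because it gives Lipschitz (hence Hölder) estimates up to $t=0$, so one can build a local barrier near $t=0$ matching $u_0$ and conclude $u(x,t)\to u_0(x)$ as $t\to 0^+$, locally uniformly; the limit $u$ is therefore continuous on $\RN\times[0,T]$ with $u(\cdot,0)=u_0$.

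\medskip
\noindent\textbf{Main obstacle.} The delicate point is the uniform-in-$n$ \emph{upper} bound on the approximating solutions $u_n$ on compact sets: a priori the state-constraint solutions $u_n$ could grow with $n$, and one must produce an $n$-independent local barrier. The superquadratic exponent $m>2$ is exactly what makes the explicit boundary-blow-up supersolution available (the $|Du|^m$ term controls $-\Delta u$ near $\partial B_R$), so this is where the hypothesis $m>2$ is essential; getting the exponents and constants in the barrier right, and checking it dominates $u_0$ — which is only bounded from below, not above — requires a little care but is routine once the right ansatz is in hand.
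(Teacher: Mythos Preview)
Your overall architecture matches the paper's --- approximate by state-constraint problems on balls, use monotonicity, get local compactness from interior gradient/H\"older estimates, pass to the limit by stability --- but you have the monotonicity direction backwards, and this reverses which bound is the ``hard'' one. A state-constraint solution is the \emph{maximal} subsolution on its domain; since $u_{n+1}$ restricted to $B_n$ is a subsolution there while $u_n$ is a supersolution up to $\partial B_n$, comparison gives $u_{n+1}\le u_n$, i.e.\ the family is \emph{nonincreasing} in $n$. (Your own sentence ``a state-constraint solution is in particular a supersolution up to the boundary'' leads to exactly this conclusion, not the opposite one.)

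With the correct monotonicity the local upper bound is immediate: for $n>2\bar R+1$ one has $0\le u_n\le u_{2\bar R+1}$ on $\overline{B_{2\bar R}}\times[0,T]$, and this is precisely what the paper does. Your explicit blow-up barrier on $B_R$ is therefore unnecessary --- it would work, and it is the kind of barrier one builds to get the supersolution in the Perron construction for each fixed $u_n$, but it is not needed again to control the family uniformly in $n$. So the ``main obstacle'' you identify dissolves once the monotonicity is stated correctly, and the remainder of your outline (local $C^{0,1/2}$ estimates up to $t=0$ using $u_0\in\locLip$, Ascoli--Arzel\`a, stability) coincides with the paper's argument.
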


To perform our approximation, we revisit the parabolic state-constraints problem on the approximating domains. Namely, for $R, T>0$, consider
	\begin{align}
		u_t - \Delta u + |Du|^m = f(x) &{} \quad\textrm{ in } B_R \times (0, T], \label{eqonball}\\
		u_t - \Delta u + |Du|^m \geq f(x) &{} \quad\textrm{ in } \partial B_R \times (0, T], \label{sconball}\\
		u(\cdot,0) = u_0 &{} \quad\textrm{ in } \overline{B}_R. \label{initialonball}
	\end{align}

\begin{remark}
	By standard arguments, there is no loss of generality in assuming, as in \eqref{vhj_whole_T} or \eqref{eqonball}, that the equation holds up to the terminal time $T>0$ (see e.g., \cite{evans1998partial}, Ch. 10).
\end{remark}

\begin{lemma}\label{lemma_SConBall}
Under the assumptions of Proposition \ref{thm_firstExt}, for every $R>0$ there exists a unique, continuous solution of \eqref{eqonball}-\eqref{initialonball}, denoted $u^R$.
\end{lemma}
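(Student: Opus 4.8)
The plan is to obtain $u^R$ as the solution of a state-constraints problem on the bounded domain $B_R$ by combining known results for such problems with Perron's method, and to get uniqueness from a comparison principle adapted to state-constraints boundary conditions. First I would recall that the parabolic state-constraints problem \eqref{eqonball}-\eqref{initialonball} is exactly of the type treated in \cite{tabet2010large} (and \cite{lasry1989nonlinear} for the stationary version): a subsolution must be a subsolution of the equation in the open cylinder $B_R\times(0,T]$, while on the lateral boundary $\partial B_R\times(0,T]$ one only requires the supersolution inequality. Since $m>2$, the Hamiltonian $p\mapsto|p|^m$ is coercive and the equation enjoys strong gradient bounds; this is what makes the state-constraints problem well-posed even though a genuine Dirichlet datum is not prescribed on the lateral boundary.

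For existence I would set up Perron's method. The key inputs are a subsolution and a supersolution that are compatible with the initial datum $u_0$ on $\overline{B}_R$ and with the state-constraints condition on $\partial B_R\times(0,T]$. A subsolution is easy: since $f$ is bounded from below on $\overline{B}_R$ and $u_0\in\locLip(\overline{B}_R)$, an affine-in-$t$ function of the form $u_0(x)-Ct$ (after regularizing $u_0$ slightly, or using its Lipschitz constant to absorb the $|Du|^m$ and $-\Delta$ terms) works for $C$ large; it automatically satisfies the lateral inequality with the correct (sub) sign, but one must check the state-constraints requirement is on the supersolution side — so in fact the subsolution only needs the interior inequality and $\le u_0$ at $t=0$. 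For the supersolution I would use a function that blows up, or is large, near $\partial B_R$: a standard choice is $\psi(x)=A(R-|x|)^{-\beta}$ plus $Ct$ for suitable $A,\beta,C>0$, which is a supersolution of the equation near the boundary (the gradient blow-up dominates because $m>2$), or alternatively invoke directly the existence result for the stationary ergodic/state-constraints problem from \cite{lasry1989nonlinear} and add a linear-in-$t$ term. The Lipschitz bound on $u_0$ guarantees the ordering of sub- and supersolution at $t=0$. Perron's method then produces a (possibly discontinuous) solution, and the comparison result upgrades it to a continuous one; interior parabolic regularity (the estimates collected in the appendix) gives smoothness in $B_R\times(0,T]$.

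For uniqueness I would prove a comparison principle: if $u$ is a bounded-from-below subsolution and $v$ a bounded-from-below supersolution of \eqref{eqonball}-\eqref{initialonball} with $u(\cdot,0)\le v(\cdot,0)$ on $\overline{B}_R$, then $u\le v$ on $\overline{B}_R\times[0,T]$. The subtlety, and what I expect to be the main obstacle, is precisely the lateral boundary: one cannot use the boundary condition to control the sup of $u-v$ there, so a maximum of $u-v$ attained on $\partial B_R\times(0,T]$ must be ruled out by the state-constraints structure. The standard device is to perturb $u$ toward the interior — replace $u$ by $u^\delta(x,t)=u((1-\delta)x + \delta x_R, t)$ or use $u(x,t)-\delta(R-|x|)^{-1}$ — so that the maximum of $u^\delta - v$ cannot occur on $\partial B_R$; then doubling variables in the interior, using the strong coercivity from $m>2$ to absorb error terms, and letting $\delta\to0$ yields the inequality. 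This is exactly the comparison argument of \cite{tabet2010large} restricted to a fixed ball, so I would cite it rather than redo it in full, noting only that the boundedness-from-below hypothesis is what lets the doubling argument localize. Uniqueness of the continuous solution $u^R$ then follows immediately, and we fix this notation for use in the proof of Proposition \ref{thm_firstExt}.
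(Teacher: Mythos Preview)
Your proposal follows essentially the same route as the paper: comparison principle for the state-constraints problem plus Perron's method with explicit sub- and supersolutions. The paper simply cites the strong comparison result of \cite{barles2004generalized}, Theorem~3.1, rather than sketching the interior-perturbation argument you outline (which is also fine), and it takes the simpler subsolution $\underline u(x,t)=\inf_{B_R}u_0 + t\inf_{B_R}f$; your choice $u_0(x)-Ct$ is awkward because $u_0$ is only Lipschitz, so ``absorbing the $-\Delta$ term'' is not meaningful and a regularization of $u_0$ would spoil the ordering at $t=0$ unless you also subtract a constant---the paper's constant-in-$x$ subsolution avoids this entirely.
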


\begin{proof}[Proof of Lemma \ref{lemma_SConBall}]
The well-posedness of \eqref{eqonball}-\eqref{initialonball} is a consequence of the strong comparison result proven in \cite{barles2004generalized}, Theorem 3.1. Existence then follows by Perron's method, provided there exist suitable sub- and supersolutions. The subsolution can be chosen as $\underline{u}(x,t) = \inf_{B_R} u_0 + t\inf_{B_R} f$ while the construction of the supersolution is more involved because of the state-constraint boundary condition.
\end{proof}

% REMARK ON ASSUMING NONNEGATIVE DATA
\begin{remark}\label{rmk_nonnegative}
As we mention it in the introduction, we note that both $u_0$ and $f$ may be assumed nonnegative with no loss of generality, and even larger than any constant. A fact that we will use several times.
\end{remark}

\begin{lemma}
Using the notation of Lemma \ref{lemma_SConBall}, if $R'\geq R$, then $u^{R'}(x) \leq u^R(x)$ for all $x\in \overline{B}_R$.
\end{lemma}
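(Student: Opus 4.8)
The plan is to use the comparison principle to show that the restriction of $u^{R'}$ to $\overline{B}_R \times [0,T]$ is a subsolution of the state-constraints problem \eqref{eqonball}-\eqref{initialonball} on $B_R$, and then invoke the comparison result for that problem (from \cite{barles2004generalized}) to conclude $u^{R'} \le u^R$ on $\overline{B}_R \times [0,T]$. Since this holds for every $T>0$, the stated inequality follows.

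First I would observe that $v := u^{R'}|_{\overline{B}_R \times [0,T]}$ solves the \emph{equation} $v_t - \Delta v + |Dv|^m = f(x)$ in $B_R \times (0,T]$ — this is immediate since $B_R \subset B_{R'}$ and $u^{R'}$ solves the equation throughout $B_{R'} \times (0,T]$. Moreover $v(\cdot,0) = u_0$ on $\overline{B}_R$. The only point requiring care is the boundary condition on $\partial B_R \times (0,T]$: I must check that $v$ satisfies the state-constraint inequality $v_t - \Delta v + |Dv|^m \ge f(x)$ there in the viscosity sense. But $\partial B_R \times (0,T]$ consists of \emph{interior} points of $B_{R'} \times (0,T]$ (in the relevant parabolic sense), where $u^{R'}$ satisfies the equation — hence in particular the supersolution inequality — so $v$ is automatically a (viscosity) supersolution there, which is exactly the state-constraint requirement \eqref{sconball}. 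Thus $v$ is a subsolution of \eqref{eqonball}-\eqref{initialonball} (indeed a full solution of the equation, satisfying the state-constraint inequality and the same initial data), while $u^R$ is the solution.

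Next I would apply the strong comparison result of \cite{barles2004generalized} (Theorem 3.1), cited in the proof of Lemma~\ref{lemma_SConBall}, to the pair $(v, u^R)$ on $\overline{B}_R \times [0,T]$: since $v \le u^R$ at $t=0$ (both equal $u_0$) and $v$ is a subsolution while $u^R$ is a supersolution of the state-constraints problem, comparison yields $v \le u^R$ on all of $\overline{B}_R \times [0,T]$. In particular $u^{R'}(x,t) \le u^R(x,t)$ for $x \in \overline{B}_R$ and $t \in [0,T]$; restricting to the relevant time slice (or noting $T$ is arbitrary) gives the claim.

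**Main obstacle.** The only genuine subtlety is the verification that the state-constraint boundary inequality holds for the restricted function $v$ on $\partial B_R \times (0,T]$. This is conceptually easy — points of $\partial B_R$ are interior to $B_{R'}$, so the equation (hence the supersolution inequality) holds there — but one should phrase it carefully in the viscosity framework: any test function touching $v$ from below at a point of $\partial B_R \times (0,T]$ also touches $u^{R'}$ from below at that same point, which is interior to $B_{R'} \times (0,T]$, so the supersolution inequality for $u^{R'}$ applies verbatim. Everything else is a direct invocation of the already-established comparison principle.
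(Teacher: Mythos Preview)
Your approach is correct and is essentially the one-line argument the paper itself mentions at the outset: the state-constraint solution $u^R$ is the maximal subsolution, and $u^{R'}$ restricted to $B_R$ is a subsolution, so $u^{R'}\le u^R$. The paper then opts for a more constructive proof (mollifying $u^{R'}$ to a smooth strict subsolution and arguing directly with the viscosity definition of the state-constraint condition) to ``illustrate this notion more clearly,'' whereas you simply invoke the strong comparison result of \cite{barles2004generalized}. Your route is shorter; the paper's route is self-contained and exhibits the mechanism.

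One minor correction: your verification that $v=u^{R'}$ satisfies the state-constraint inequality \eqref{sconball} on $\partial B_R\times(0,T]$ is \emph{unnecessary}, and the justification you give is not quite complete. For the comparison, you only need $v$ to be a \emph{subsolution} of the equation in the \emph{open} set $B_R\times(0,T]$ (which is immediate) and $u^R$ to be a supersolution on the \emph{closed} set $\overline{B}_R\times(0,T]$ (which is precisely the state-constraint condition it satisfies by construction). The state-constraint condition is a supersolution requirement and plays no role for the subsolution in the comparison. Moreover, your argument that ``any test function touching $v$ from below at a point of $\partial B_R$ also touches $u^{R'}$ from below at that same interior point of $B_{R'}$'' is not literally correct: a test function touching from below only on the half-neighborhood $\overline{B}_R\cap B_\rho(x_0)$ need not touch $u^{R'}$ from below on a full neighborhood in $B_{R'}$, so there are more admissible test functions in the boundary sense than in the interior sense. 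This gap does not affect your proof, since the claim is superfluous; simply drop that paragraph and invoke comparison directly.
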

% MONOTONICITY IN R

\begin{proof}
The result follows from the fact the solution satisfying the state-constraints boundary condition \eqref{sconball} is the maximal subsolution of
\eqref{eqonball}-\eqref{initialonball} (see e.g., \cite{crandall1992user}, Section 7.C$'$). We provide a constructive proof of the result, however, to illustrate this notion more clearly.

Observe that, for $R'\geq R$,  $u^{R'}$ is in particular a subsolution of \eqref{eqonball} in $B_R \times (0,T)$, satisfying also \eqref{initialonball}. Let $\epsilon>0$, and let $\rho_\epsilon$ denote the standard mollifier in $\mathbb{R}^{N+1}$. Since \eqref{eqonball} is convex in $(u, Du)$, $u^{R'} \ast \rho_\epsilon$ is a classical subsolution of \eqref{eqonball} in $B_R$ (this is Lemma 2.7 in \cite{barles2002convergence}). Define, for small ${\delta > 0}$, $x\in \overline{B}_R$ and $t\geq 0$,
\begin{equation*}
w(x,t) = (u^{R'} \ast \rho_\epsilon) (x,t) - \delta t.
\end{equation*}
(We have dropped the dependences of $w$ in $R'$, $\epsilon$ and $\delta$ for the sake of notational simplicity.)
We claim that $w$ is a smooth subsolution of \eqref{eqonball} in $B_R \times (0,T)$. Indeed, for all $(x,t)\in B_R \times (0,T]$, we can compute
\begin{align}
& w_t(x,t) - \Delta w(x,t) + |Dw(x,t)|^m\nonumber\\
& \quad = (u^{R'} \ast \rho_\epsilon)_t - \Delta (u^{R'} \ast \rho_\epsilon)(x,t) + |D(u^{R'} \ast \rho_\epsilon)(x,t)|^m - \delta\nonumber\\
& \quad \leq f(x) - \delta < f(x).\label{strictsubR}
\end{align}
And this computation, which is only valid for smooth enough $u^{R'}$ can be justified if $u^{R'}$ is only a continuous viscosity subsolution of \eqref{eqonball}.
 
We set $\eta_\epsilon:= \min_{\overline{B_R}}(u_0-w(x,0))$. We remark that, by the continuity of $u^{R'}$, $\eta_\epsilon\to 0$ as $\epsilon \to 0$. We now show that $u^R \geq w+ \eta_\epsilon$ in ${\overline{B_R}} \times [0,T]$ for any $T>0$. We argue by contradiction assuming that
\begin{equation*}
(u^R - w-\eta_\epsilon)(x_0,t_0) := \min_{\overline{B_R}\times[0,T]} (u^R - w-\eta_\epsilon) < 0.
\end{equation*}
By definition of $\eta_\epsilon$, we have $t_0>0$ and since $w+ \eta_\epsilon$ is smooth, the definition of the state-constraints boundary condition \eqref{sconball} implies that 
\begin{equation*}
w_t(x_0,t_0) - \Delta w(x_0,t_0) + |Dw(x_0,t_0)|^m \geq f(x_0),
\end{equation*}
which contradicts \eqref{strictsubR}. 
Therefore, $u^R\geq w+ \eta_\epsilon$ in $\overline{B_R}\times[0,T]$. Taking the limit $\epsilon, \delta \rightarrow 0$, then $T\rightarrow \infty$, we conclude.
\end{proof}

\begin{proof}[Proof of Proposition \ref{thm_firstExt}.]
We are going to obtain a solution of \eqref{vhj_whole_T}-\eqref{initialData_whole_T} as a locally uniform limit of the solutions $u^R$ of Lemma \ref{lemma_SConBall} as $R\to +\infty$.

% UNIFORM BOUNDS \|u_0\| +- T \|f\| EQUICONTINUITY ON \RN \times [\tau, T-\tau] FROM APPENDIX
To do so, we consider a fixed $\bar R$. For $R>2\bar R +1$, we have, assuming $u_0,f\geq 0$
$$ 0 \leq u^R \leq u^{2\bar R +1} \quad \hbox{in  }\overline{B_{2\bar R}} \times [0,T]\; ,$$
 hence $\{u^R\}_{R> 2\bar R +1}$ is uniformly bounded over $\overline{B_{2\bar R}} \times [0,T]$.
 
Furthermore, using Theorem~\ref{gradBound} and Corollary \ref{gradBoundCor} in the Appendix, the $C^{0,1/2}$-norm
of $u^R$ on $\overline{B_{\bar R}} \times [0,T]$ remains also uniformly bounded for $R>2\bar R +1$. And we also recall that
the $u^R$ are decreasing in $R$.

Thus, by using the Ascoli-Arzela Theorem together with the monotonicity of $(U^R)_R$, we have the uniform convergence of $u^R$ 
on $\overline{B_{\bar R}} \times [0,T])$. Then, by a diagonal argument, we may extract a subsequence of $\{u^R\}_{R>0}$ that converges locally uniformly to some $u\in C(\RN\times [0,T])$. By stability, it follows that $u$ is a viscosity solution of \eqref{vhj_whole_T}-\eqref{initialData_whole_T}.
\end{proof}
 
 % REVIEW FOLLOWING REMARK IN LIGHT OF SETTING ON (0,T)
%\begin{remark}
%The solution obtained in Theorem \ref{thm_firstExt} is bounded from below and also locally bounded, i.e., bounded on $B_R\times(0,T)$ for all $R,T>0$, but it can still occur that $u(x,t)\to+\infty$ as $t\to+\infty$ for some $x\in\RN$. This is in contrast to the situation of Lemma \ref{lemma_unifBounded}.
%\end{remark}

\subsection{A General Comparison Result}

We provide in this section a general comparison result, not only in $\RN\times [0, T]$, but in a more general domain $Q$ which may be a proper subset of $\RN\times [0, T]$.  The following comparison result is formulated in terms of viscosity sub- and supersolutions where $USC(Q)$ denotes the set of upper semi-continuous functions on $\overline{Q}$, while $LSC(Q)$ is the set of lower semi-continuous ones.
	
	\begin{theorem}\label{thm_comparison}
		Let $T\in (0,+\infty]$. Assume $f\in\locLip(\RN)$ is bounded from below and $Q\subset \RN\times (0,T)$ is a nonempty set, open with respect to the parabolic topology (see Subsection~\ref{subsec_Notation} for notation and definitions). Let $u\in USC(Q)$ and $v\in LSC(Q)$ be respectively a subsolution and a supersolution of Equation~\eqref{vhj_whole} in $Q$, both bounded from below. If, for all $(x,t)\in \partial_p Q$, we have $\displaystyle \limsup_{(y,s)\to (x,t)} (u-v) \leq 0$, then $u\leq v$ in $Q$.
	\end{theorem}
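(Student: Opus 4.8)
The plan is to argue by contradiction, assuming $\sup_Q(u-v)>0$, and to run a doubling‑of‑variables argument localized by a penalization that tames the behavior of $u-v$ at infinity. Three reductions come first. Adding $C_1t+C_2$ to both $u$ and $v$ changes $f$ into $f+C_1$ while preserving the sub/supersolution properties and the boundary hypothesis, and adding a constant to both functions does not affect the equation; so we may assume $f\ge 1$ on $\RN$ and $u,v\ge 0$ on $Q$. Next, for $\mu\in(0,1)$ the function $\mu u$ is a subsolution of $w_t-\Delta w+|Dw|^m\le \mu f\le f-(1-\mu)$, i.e. a \emph{strict} subsolution with a uniform gap, and $\mu u\le u$ gives $\limsup_{\partial_pQ}(\mu u-v)\le0$; since $u=\sup_{\mu<1}\mu u$ it suffices to prove $\mu u\le v$ for each $\mu$. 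So we assume henceforth that $u$ is a subsolution of $u_t-\Delta u+|Du|^m\le f-\theta$ for some fixed $\theta>0$, still with $u,v\ge0$ and $\limsup_{\partial_pQ}(u-v)\le0$, and that $\sup_Q(u-v)>0$. Finally, if $T=+\infty$ we fix an arbitrary $T_0<\infty$ and work on $Q\cap(\RN\times(0,T_0))$; a contradiction there for every $T_0$ suffices.

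For $\epsilon,\eta,\delta>0$ consider, on $\overline Q\cap(\RN\times[0,T_0))$,
\[
\Phi(x,y,t)=u(x,t)-v(y,t)-\frac{|x-y|^2}{2\epsilon}-\eta\Psi(x)-\frac{\delta}{T_0-t},
\]
where $\Psi\in C^2(\RN)$, $\Psi\ge0$, is a coercive penalization growing fast enough that $u(x,t)-\eta\Psi(x)\to-\infty$ as $|x|\to\infty$, uniformly in $t\in[0,T_0]$. Since $u$ is upper semicontinuous it is bounded on compact subsets of $\overline Q\cap(\RN\times[0,T_0])$, so such a $\Psi$ exists; but to keep the perturbation harmless in the limits below $\Psi$ must be chosen with care, and this is where the superquadratic structure and the estimates of \cite{barles2016unbounded} enter. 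Using $v\ge0$, $\Phi\to-\infty$ as $|x|\to\infty$ and as $t\to T_0$, and $\Phi\to-\infty$ along any approach to $\partial_pQ$ by the boundary hypothesis and semicontinuity; since $\sup_Q(u-v)>0$, fixing first a point where $u-v>0$ shows that for $\eta,\delta$ small $\sup\Phi>0$. Hence $\Phi$ attains its maximum at some $(x_0,y_0,t_0)$ with, for $\epsilon$ small, $0<t_0<T_0$ and $(x_0,t_0),(y_0,t_0)$ in the parabolic interior of $Q$; moreover the standard penalization lemma (see \cite{crandall1992user}) gives $|x_0-y_0|^2/\epsilon\to0$, and all these points stay in a fixed compact set $K_\eta$ at positive distance from $\partial_pQ$.

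On $K_\eta$, away from $\partial_pQ$, the interior Lipschitz estimate for subsolutions of the superquadratic equation (Theorem~\ref{gradBound} in the Appendix, and \cite{barles2016unbounded}) gives $|D_xu|\le L$, so $|x_0-y_0|/\epsilon\le L+\eta\|D\Psi\|_{L^\infty(K_\eta)}$ is bounded independently of $\epsilon$. The parabolic theorem on sums applied to $\Phi$ produces $b,X,Y$ with
$\bigl(b+\tfrac{\delta}{(T_0-t_0)^2},\ \tfrac{x_0-y_0}{\epsilon}+\eta D\Psi(x_0),\ X\bigr)\in\overline{\mathcal P}^{2,+}u(x_0,t_0)$,
$\bigl(b,\ \tfrac{x_0-y_0}{\epsilon},\ Y\bigr)\in\overline{\mathcal P}^{2,-}v(y_0,t_0)$, and $\mathrm{tr}(X-Y)\le\eta\,\mathrm{tr}(D^2\Psi(x_0))+o(1)$. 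Subtracting the strict subsolution and the supersolution inequalities, discarding the good term $\tfrac{\delta}{(T_0-t_0)^2}\ge0$, and using the convexity bound $|p+q|^m-|p|^m\ge-m|p|^{m-1}|q|$ with $p=\tfrac{x_0-y_0}{\epsilon}$ and $q=\eta D\Psi(x_0)$, we arrive at
\[
\theta\le f(x_0)-f(y_0)+C\,\eta\,(1+L^{m-1})\,\|D\Psi\|_{C^2(K_\eta)}+o_\epsilon(1).
\]
Letting $\epsilon\to0$ (so $|x_0-y_0|\to0$ and $f(x_0)-f(y_0)\to0$ by continuity of $f$ on $K_\eta$) and then $\eta\to0$ — the latter legitimate precisely because $\Psi$ was chosen so that $\eta\|D\Psi\|_{C^2(K_\eta)}\to0$ — yields $\theta\le0$, a contradiction. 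Sending $\delta\to0$ and, when $T=+\infty$, $T_0\to+\infty$, finishes the proof.

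The main obstacle is the construction of $\Psi$: since neither $u$, $v$ nor $f$ has any prescribed growth, $\Psi$ must dominate $u$ at infinity (to force the maximum to be attained) yet remain tame enough that $\eta\|D\Psi\|_{C^2(K_\eta)}\to0$ as $\eta\to0$, where $K_\eta$ is the — potentially enlarging — localization region. Reconciling these two requirements is exactly the place where one must exploit that $m>2$: the superquadratic Hamiltonian both imposes enough coercivity on admissible functions and supplies the a priori gradient bound on $u$ used to control $|x_0-y_0|/\epsilon$, and the explicit sub-/supersolution computations of \cite{barles2016unbounded} are what make an appropriate $\Psi$ available. Everything else is a routine, if lengthy, adaptation of the parabolic comparison argument to the general parabolic‑open domain $Q$.
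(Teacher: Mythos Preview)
Your outline has a genuine gap at precisely the point you identify as the ``main obstacle'': the penalization $\Psi$. You need $\Psi$ to dominate $u$ at infinity, but $u$ is only assumed bounded from below and may have \emph{arbitrary} growth; hence $\Psi$ must be allowed arbitrary growth too. At the same time you need $\eta\,\|\Psi\|_{C^2(K_\eta)}\to 0$ as $\eta\to 0$, where $K_\eta$ is the localization set---which enlarges as $\eta\to 0$. These two demands are in tension, and you do not reconcile them; the appeal to \cite{barles2016unbounded} does not close the gap, since the comparison result there is proved only outside a large ball and relies on the coercivity of $f$, an assumption absent here (the theorem only asks $f$ bounded from below). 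A related issue is your use of an interior Lipschitz bound for the \emph{subsolution} $u$: Theorem~\ref{gradBound} is stated for solutions, while the estimate available for mere subsolutions of superquadratic equations is the H\"older bound of Theorem~\ref{thm_holder} with exponent $(m-2)/(m-1)<1$. Even granting a Lipschitz bound, the constant $L$ would depend on the compact set and hence on $\eta$, so the term $\eta(1+L^{m-1})\|D\Psi\|_{C^2(K_\eta)}$ need not vanish.

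The paper avoids this entire difficulty by a change of unknowns: setting $z_1=-e^{-u}$, $z_2=-e^{-v}$ turns the (bounded-from-below) $u,v$ into \emph{bounded} functions satisfying a transformed equation $z_t-\Delta z+N(x,z,Dz)=0$ with $N(x,r,p)=r\bigl(f(x)+|p/r|^2-|p/r|^m\bigr)$. With bounded unknowns, a simple quadratic penalization $\delta(|x|^2+t)$ suffices to localize, and no growth-matching $\Psi$ is needed. The price is that the transformed nonlinearity depends on $r$ in a non-trivial way; the core computation is then to show, using $m>2$ and the freedom to assume $f\ge 1+\frac{m}{m-2}$, that $\partial_r N$ dominates $|\partial_p N|$ (specifically $\partial_r N\ge 1+C|p/r|^m$ and $|\partial_p N|\le 2m(1+|p/r|^m)$), which yields a strictly positive lower bound for the relevant difference and the contradiction. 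This exponential change of variables is the key idea you are missing.
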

	
	The first main point in Theorem~\ref{thm_comparison} is that the set $Q$ is allowed to be unbounded. Consequently, $u\in USC(Q)$ and $v\in LSC(Q)$ are allowed to be unbounded as well. Naturally, we are mainly interested in the case $Q=\RN\times(0,+\infty)$. However\textemdash and this is why we need such a general formulation\textemdash in the study of the large time behavior, we have to use such comparison result with a supersolution $V$ which is not defined in the whole space but on a set $Q$ on which we know that $V(x,t)\to+\infty$ if $(x,t)\to \partial_p Q$ with $t>0$.

\begin{proof}
To deal with the difficulty of $u$ and $v$ being unbounded, we define
\begin{equation*}
z_1(x,t) = -e^{-u(x,t)}, \qquad z_2(x,t) = -e^{-v(x,t)}.
\end{equation*}
Since $u$ and $v$ are bounded from below, $z_1$ and $z_2$ are bounded. Furthermore, we have that $z_1$ and $z_2$ are respectively a sub- and supersolution of
\begin{equation}\label{eq_transformed}
z_t - \Delta z + N(x,z,Dz) = 0,
\end{equation}
where $N: \RN \times \R \times \RN\to \RN$ is given by
\begin{equation}\label{rhs_transformed}
N(x,r,p) = r\left(f(x) + \left|\frac{p}{r}\right|^2 - \left|\frac{p}{r}\right|^m\right).
\end{equation}
Formally, this is a straightforward computation. Given the monotonicity of the transformation defining $z_1, z_2,$ there is no difficulty in passing the computation over to smooth test functions. Another consequence is that $z_1(x,t)\leq z_2(x,t)$ if and only if $u(x,t)\leq v(x,t)$. Hence Theorem \ref{thm_comparison} follows from proving the same comparison result for sub- and supersolutions of \eqref{eq_transformed}.

We proceed with the usual scheme of doubling variables, and for simplicity we first treat the case in which $Q$ is unbounded in $t$. Assume that the conclusion of the theorem is false and $M:= \sup_Q (z_1 - z_2)> 0$. Define first, for $\delta>0$,
	\begin{equation}\label{M_pen1}
	M_\delta := \sup_Q \left( z_1(x,t) - z_2(x,t) - \delta(|x|^2 + t)\right) .
	\end{equation}
As the penalized function on the right-hand side of \eqref{M_pen1} is upper-semicontinuous and goes to $-\infty$ as either $|x|\to+\infty$ or $t\to +\infty$, the supremum in achieved at some $(\bar{x}, \bar{t})\in \overline{Q}$. From standard arguments, we have that $M_\delta\to M$ as $\delta\to0$ (see e.g., \cite{crandall1992user}, Proposition 3.7), which implies that for small enough $\delta$, $M_\delta>0$. Moreover, since $\limsup_{(y,s) \to (x,t)} (z_1(y,s) - z_2(y,s))$ for all $(x,t)\in \partial_p Q$, we necessarily have $(\bar{x}, \bar{t})\in Q$. Define now
	\begin{equation*}
	M_{\delta, \alpha} := \sup_{Q\times Q} \left\{ z_1(x,t) - z_2(y,t) - \frac{\delta}{2}(|x|^2 + |y|^2 + t) - \frac{\alpha}{2}|x-y|^2 \right\}.
	\end{equation*}
Again the supremum above if achieved at a point $(\hat{x}, \hat{y}, \hat{t})$. Similarly, we know that as $\alpha\rightarrow\infty$ and $\delta$ remains fixed,  $M_{\alpha, \delta} \rightarrow M_\delta$ and $\hat{x}, \hat{y}\to\bar{x}$ and $\hat{t}\to \bar{t}$ for $\bar{x}, \bar{t}$ as above.

Thus, an application of Ishii's Lemma (see \cite{crandall1992user}, Theorem 3.2) gives
\begin{equation}\label{variationineq}
N(\hat{x}, z_1(\hat{x}, \hat{t}), \alpha(\hat{x}- \hat{y}) + \delta \hat{x}) - N(\hat{y}, z_2(\hat{y}, \hat{t}), \alpha(\hat{x}- \hat{y}) - \delta \hat{y}) \leq \left(2n + \frac{1}{2}\right) \delta.
\end{equation}

We aim to bound this difference from below by a positive constant independent of $\delta$. Let
\begin{equation*}
h(s) = N(a(s), b(s), c(s)),
\end{equation*}
where
\begin{align*}
& a(s) = s\hat{x} + (1-s)\hat{y},\qquad b(s) = sz_1(\hat{x}, \hat{t}) + (1-s)z_2(\hat{y}, \hat{t}),\\
& c(s) = \alpha (\hat{x} - \hat{y}) + \delta(s\hat{x} + (s-1)\hat{y}).
\end{align*}
We rewrite \eqref{variationineq} as
\begin{align}\label{variationineq_ReWr}
& N(\hat{x}, z_1(\hat{x}, \hat{t}), \alpha(\hat{x} - \hat{y}) + \delta \hat{x}) - N(\hat{y}, z_2(\hat{y}, \hat{t}), \alpha(\hat{x}- \hat{y}) - \delta \hat{y})\nonumber\\
&{} = N(a(1), b(1), c(1)) -  N(a(0), b(0), c(0))\nonumber\\
&{} = h(1) - h(0) = \int_0^1 h'(s) \,ds.
\end{align}
Computing
\begin{align*}
& a'(s) = \hat{x} - \hat{y},\qquad b'(s) = z_1(\hat{x}, \hat{t}) - z_2(\hat{y}, \hat{t}),\\
& c'(s) = \delta (\hat{x} + \hat{y}),
\end{align*}
and, from \eqref{rhs_transformed},
\begin{align*}
& \frac{\partial N}{\partial x} = r\,Df(x),\qquad \frac{\partial N}{\partial r} = f(x) - \left|\frac{p}{r}\right|^2 +(m-1) \left|\frac{p}{r}\right|^m,\\[4pt]
& \frac{\partial N}{\partial p} = \left( \frac{2}{|r|^2} - \frac{m(m-2) |p|^{m-2}}{|r|^m}\right) p,
\end{align*}
we have
\begin{align*}
\int_0^1 h'(s) \,ds &{}= \int_0^1 \frac{\partial N}{\partial x}(a(s), b(s), c(s)) \cdot a'(s) + \frac{\partial N}{\partial r}(a(s), b(s), c(s)) b'(s) \\
&\qquad{}+ \frac{\partial N}{\partial p}(a(s), b(s), c(s)) \cdot c'(s) \,ds.
\end{align*}
We proceed to bound each term in the last expression
\begin{align*}
\frac{\partial N}{\partial x}(a(s), b(s), c(s)) \cdot a'(s) \geq -\max\{\|z_1\|_\infty, \|z_2\|_\infty\}\|Df\|_{\infty,B_{R(\delta)}}|\hat{x}-\hat{y}|,
\end{align*}
where $B_{R(\delta)}$ is a ball that contains $(\hat{x}, \hat{y})$, the points at which the maximum of $\Phi=\Phi^{\delta,\alpha}(x,y)$ is achieved, considering $\delta>0$ is fixed; $B_{R(\delta)}$ is uniform in $\alpha$. Since $\hat{x}-\hat{y}\rightarrow 0$ as $\alpha\rightarrow\infty$, this implies
\begin{equation*}
\frac{\partial N}{\partial x}(a(s), b(s), c(s)) \cdot a'(s) \rightarrow 0 \quad\textrm{as } \alpha \rightarrow \infty.
\end{equation*}
We write $q:=\frac{p}{r}$. Young's inequality gives
\begin{equation*}
|q|^2 \leq \frac{2}{m}|q|^m + \frac{m}{m-2},
\end{equation*}
hence
\begin{align}\label{partial_NrBound}
\frac{\partial N}{\partial r} &{} = f(x) - \left|\frac{p}{r}\right|^2 +(m-1) \left|\frac{p}{r}\right|^m\nonumber\\
&{}\geq f(x) + (m-1)|q|^m - \frac{2}{m}|q|^m - \frac{m}{m-2}\nonumber\\
&{}\geq f(x) - \frac{m}{m-2} + \left( m - 1 - \frac{2}{m} \right)|q|^m\nonumber\\
&{}\geq 1 + C |q|^m,
\end{align} 
taking $0< C < m - 1 - \frac{2}{m}$ in the last inequality. The bound 
\begin{equation}\label{big_f_assumed}
f(x) \geq 1 + \frac{m}{m-2} \quad\textrm{for all } x\in\RN,
\end{equation}
may be assumed without loss of generality by initially considering, instead of $u$ and $v$, the functions
\begin{equation*}
u(x,t) + \left(1 + \frac{m}{m-2}\right) t, \qquad v(x,t) + \left(1 + \frac{m}{m-2}\right) t,
\end{equation*}
for all $(x,t)\in Q$ (see the comments at the beginning of the introduction). On the other hand, by direct computation,
\begin{align*}
\left|\frac{\partial N}{\partial p}\right| &{}= 2\left|\frac{p}{r}\right| + m\left|\frac{p}{r}\right|^{m-1} = 2|q|+ m|q|^{m-1}\\
&{}\leq m(|q| +|q|^{m-1}) \leq 2m(1 + |q|^m).
\end{align*}
Here we've used that $m>2$ and that $|q| +|q|^{m-1}\leq 2(1 + |q|^m)$, which follows easily by considering the cases $|q|>1$ and $|q|\leq1$ separately. We have therefore obtained
\begin{equation}\label{dompartial}
\left(1 + \frac{2m}{C}\right)\frac{\partial N}{\partial r} \geq \left|\frac{\partial N}{\partial p}\right|.
\end{equation}
Thus, using \eqref{partial_NrBound} and \eqref{dompartial}, for small $\delta>0$ we have
\begin{align}
\liminf_{\alpha\rightarrow \infty} \int_{0}^{1} h'(s) \,ds &{}\geq \liminf_{\alpha\rightarrow \infty} \int_{0}^{1} \frac{\partial N}{\partial r} (z_1(\hat{x}, \hat{t}) - z_2(\hat{y}, \hat{t})) + \frac{\partial N}{\partial p} \cdot \delta(\hat{x} + \hat{y}) \,ds\nonumber\\
&{}\geq \liminf_{\alpha\rightarrow \infty} \int_{0}^{1} \frac{\partial N}{\partial r} \left(M_{\alpha,\delta} + \delta(|\hat{x}|^2 + |\hat{y}|^2 + \hat{t}) + \frac{\alpha}{2}|\hat{x} - \hat{y}|^2\right)\nonumber\\
&{}\quad + \frac{\partial N}{\partial p} \cdot \delta(\hat{x} + \hat{y}) \,ds\nonumber\\
&{}\geq \liminf_{\alpha\rightarrow \infty} \int_{0}^{1} \frac{\partial N}{\partial r} M_{\alpha,\delta} - \delta\left|\frac{\partial N}{\partial p}\right|(|\hat{x}| + |\hat{y}|) \,ds\nonumber\\
&{}\geq \int_{0}^{1} \frac{\partial N}{\partial r} \left(M_\delta - 2\delta\left(1 + \frac{2m}{C}\right)|\bar{x}|\right) \,ds\nonumber\\
&{}> \frac{M}{2} >0.\label{contradicIneq}
\end{align}
Here we have also used the facts concerning the limits $\alpha\to\infty$, $\delta\to0$ (taken in that order) mentioned at the outset of the argument. Together with \eqref{variationineq} and \eqref{variationineq_ReWr}, \eqref{contradicIneq} gives the desired contradiction.

Finally, if $Q\subset \RN \times [0,T]$ for some $T>0$, we note that the preceding argument equally applies, but the penalization term in $t$ is no longer necessary in \eqref{M_pen1}. That is, we may define
		\begin{equation*}
			M_\delta := \sup_Q \left( z_1(x,t) - z_2(x,t) - \delta |x|^2\right),
		\end{equation*}
	instead of \eqref{M_pen1} and proceed in the same way.
\end{proof}

\begin{remark} In \cite{barles2016unbounded}, an analogous comparison result is proved in the complementary of ball $B_R$ if $R$ is large enough (depending on the coercivity for $f$): here, because of the parabolic framework, such argument can be performed in the whole space.
\end{remark}

%\begin{remark}\label{rmk_parabolicDomain}
%	Although it is customary to consider the equation to hold over the domain $\RN\times (0,T)$ for $t>0$, as we have written in \eqref{vhj_whole_T}-\eqref{initialData_whole_T}, by adding a suitable penalization term in $t$ it is possible to assume the equation holds over $\RN\times (0,T]$.
%\end{remark}

\subsection{Conclusion}

\begin{proof}[Proof of Theorem \ref{thm_Soln}]
We note that, from Remark \ref{rmk_nonnegative}, any solution of \eqref{vhj_whole_T}-\eqref{initialData_whole_T} is bounded from below over $\RN\times [0,T]$, since it is nonnegative. Therefore, combining the results of Proposition \ref{thm_firstExt} and Theorem \ref{thm_comparison}, there exists a unique solution of \eqref{vhj_whole_T}-\eqref{initialData_whole_T}. Since $T>0$ is arbitrary (in particular, with no dependence on the data) the solution can be uniquely extended to a solution of \eqref{vhj_whole}-\eqref{initialData_whole}, which is also nonnegative. Uniqueness then follows immediately from Theorem~\ref{thm_comparison}. And the case when $u_0 \in \locLip(\RN)$ is complete.

When $u_0 \in C(\RN)$, we argue in the following way: by classical results, there exists a sequence $(u^\epsilon_0)_\epsilon$ of functions of $\locLip(\RN)$ such 
that, for any $\epsilon$, $ |u^\epsilon_0(x)-u_0(x)|\leq \epsilon$ in $\RN$. If $u^\epsilon$ denotes the unique solution of \eqref{vhj_whole} associated to the initial data $u^\epsilon_0$, we have, by the comparison result
$$ |u^\epsilon (x,t) -u^{\epsilon'}(x,t)|\leq  \epsilon' + \epsilon \quad \textrm{in } \RN\times [0,+\infty)\; ,$$
since $|u^\epsilon_0 (x) -u^{\epsilon'}_0(x)|\leq  \epsilon' + \epsilon$ in $\RN$.
Therefore, the Cauchy sequence $(u^\epsilon)_\epsilon$ converge uniformly in $ \RN\times [0,+\infty)$ to the unique viscosity solution of of \eqref{vhj_whole}-\eqref{initialData_whole} by Theorem~\ref{thm_comparison}.
\end{proof}

\section{Sub- and supersolutions}\label{sec_subSuper}

In this section we construct sub- and supersolutions to a modified evolution problem. This will allow us to relate the solution of \eqref{vhj_whole}-\eqref{initialData_whole} to the solution of the associated ergodic problem \eqref{erg_prob_intro}. To this end, we first obtain some preliminary results regarding the solution of \eqref{erg_prob_intro}.

\subsection{Estimates for the solution of the ergodic problem}\label{subsec_ErgEstimates}

\begin{lemma}\label{lemma_upGrowthPhi} Assume that $f \in C(\RN)$ and set $M_R = \sup_{B_R} |f - \lambda^*|$. Then, for any $R>1$, there exists a constant $C_1 >0$ such that any solution $\phi$ of \eqref{erg_prob_intro} satisfies
\begin{equation*}
|\phi(x)-\phi(0)| \leq C_1 R M_R^{\frac1m}\quad\textrm{for all } x\in \RN,\ |x|\leq R.
\end{equation*}
In particular, if \emph{\ref{growth_f}}, for any $R>1$ and for any solution $\phi$ of \eqref{erg_prob_intro}, there exists a constant $C>0$ such that
\begin{equation*}
\phi(x) -\phi(0) \leq C\left(\varphi(R) ^{\frac{\alpha + m}{\alpha m}} + 1 \right) \quad\textrm{for all } x\in \RN,\ |x|\leq R.
\end{equation*}
\end{lemma}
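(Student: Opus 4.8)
The plan is to prove the local oscillation bound for $\phi$ by a scaling argument combined with interior gradient estimates for the (scaled) ergodic equation, and then to deduce the second, explicit bound by inserting assumption \ref{growth_f}.

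First I would fix $R>1$ and rescale: set $\psi(y) = \phi(Ry)/R^{?}$ — more precisely, the natural scaling here is dictated by the superquadratic structure. Writing $\phi(x) = R\,\psi(x/R)$ would turn $-\Delta\phi + |D\phi|^m$ into $R^{-1}(-\Delta\psi) + R^{-m}|D\psi|^m \cdot R^m$... so instead I would look for the scaling that balances the first-order term against the right-hand side. Since $f - \lambda^*$ is of size $M_R$ on $B_R$, the heuristic is that $|D\phi| \sim M_R^{1/m}$, so $\phi$ varies by $\sim R\,M_R^{1/m}$ across $B_R$; this is exactly the claimed bound. To make this rigorous I would use the gradient estimate from \cite{barles2016unbounded} (the same computations referenced in the statement) applied on balls: on $B_{2R}$ one gets a bound of the form $|D\phi(x)| \leq C(M_{2R}^{1/m} + R^{-\beta}\|\phi - \phi(0)\|_{\infty,B_{2R}}^{\gamma})$ for $|x|\le R$, where the second term comes from the a priori oscillation and has a favorable power (because $m>2$) allowing it to be absorbed. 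Integrating $|D\phi|$ along a segment from $0$ to $x$ then yields $|\phi(x) - \phi(0)| \le C R M_R^{1/m}$ after the absorption step. Alternatively, and perhaps more cleanly, one rescales to a unit ball: define $\tilde\phi(y) := M_R^{-1/m} R^{-1}\big(\phi(Ry) - \phi(0)\big)$ on $B_1$, which solves $-\Delta\tilde\phi + |D\tilde\phi|^m = R^{?}M_R^{?}(f(Ry)-\lambda^*)$; choosing the exponents so the coefficient in front of $(f-\lambda^*)/M_R$ is $O(1)$ and bounded, interior estimates for this normalized equation on $B_{1/2}$ give $\|\tilde\phi\|_{\infty,B_{1/2}}\le C$, which unwinds to the desired inequality. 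The main obstacle is getting the scaling exponents right so that (i) the rescaled right-hand side is bounded by an absolute constant, and (ii) the boundary/interior estimate one invokes genuinely applies — i.e. checking that the gradient bound from \cite{barles2016unbounded} is of the stated localized form and that the extra oscillation term can be absorbed using $m>2$. There is also the minor point that $M_R$ could be small or zero, which should be handled by replacing $M_R$ with $M_R + \varepsilon$ and letting $\varepsilon\to 0$, or by noting the estimate is trivial in that degenerate case.

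For the second assertion, I would simply substitute \ref{growth_f} into the first bound. Under \ref{growth_f} we have $f(x) \le f_0(\varphi(|x|)+1)$, and $\lambda^*$ is a fixed constant, so on $B_R$, $M_R = \sup_{B_R}|f-\lambda^*| \le C(\varphi(R) + 1)$, using that $\varphi$ is increasing (so $\varphi(|x|)\le\varphi(R)$ for $|x|\le R$) and that the lower bound $\varphi_0^{-1}r^\alpha \le \varphi(r)$ together with $\varphi_0^{-1}\varphi(r) - f_0 \le f(x)$ controls $f$ from below as well, giving $|f - \lambda^*| \le C(\varphi(R)+1)$ throughout $B_R$. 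Then $R M_R^{1/m} \le C R(\varphi(R)+1)^{1/m}$, and since $R \le (\varphi_0\,\varphi(R))^{1/\alpha}$ by the lower bound on $\varphi$, we get $R M_R^{1/m} \le C\varphi(R)^{1/\alpha}(\varphi(R)+1)^{1/m} \le C(\varphi(R)^{1/\alpha + 1/m} + 1) = C(\varphi(R)^{(\alpha+m)/(\alpha m)} + 1)$, which is exactly the claimed inequality. The only care needed here is the elementary manipulation $\varphi(R)^{1/\alpha}(\varphi(R)+1)^{1/m} \le C(\varphi(R)^{1/\alpha+1/m}+1)$, which follows by splitting into the cases $\varphi(R)\ge 1$ and $\varphi(R)<1$; no real difficulty. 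Overall the substantive content is entirely in the first part, and within that, in correctly setting up the scaling so as to reduce to a uniform interior estimate.
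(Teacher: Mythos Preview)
Your second part (deducing the explicit bound from \ref{growth_f}) is fine and matches the paper's argument.

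For the first part, your rescaling strategy is the right one and, once the exponents are computed, coincides with the paper's: setting $\tilde\phi(y)=M_R^{-1/m}R^{-1}\phi(Ry)$ leads to
\[
-\varepsilon_R\,\Delta\tilde\phi + |D\tilde\phi|^m \le 1 \quad\text{in }B_1,\qquad \varepsilon_R = M_R^{\frac{1}{m}-1}R^{-1}\to 0.
\]
The gap is in the step you leave vague: ``interior estimates for this normalized equation give $\|\tilde\phi\|_{\infty,B_{1/2}}\le C$''. Since the diffusion coefficient $\varepsilon_R$ degenerates, a standard elliptic or Bernstein-type interior estimate will not give a constant independent of $R$; and the localized gradient bound you invoke from \cite{barles2016unbounded} (stated in the appendix as Theorem~\ref{gradBound}\ref{gradBerg}) involves $\sup|Df|^{1/(2m-1)}$, not just $M_R$, so it yields a strictly weaker conclusion than the lemma claims. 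The paper explicitly flags this: the point is precisely that one gets a bound depending only on $\sup_{B_R}|f-\lambda^*|$ and on $B_R$ itself, ``contrarily to a Bernstein estimate''.

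What the paper actually uses is the H\"older estimate of Capuzzo Dolcetta--Leoni--Porretta (Theorem~\ref{thm_holder}), whose key feature is that it depends on $\|A\|_\infty$ but \emph{not} on any lower bound for the diffusion matrix; in particular it is uniform as $\varepsilon_R\to 0$. Applied to $\tilde\phi$ it gives $|\tilde\phi(x)-\tilde\phi(0)|\le C_1|x|^\gamma$ on $B_1$ with $C_1$ independent of $R$, which unwinds to the stated bound. Your absorption argument with a putative term $R^{-\beta}\|\phi-\phi(0)\|_\infty^\gamma$ is not how this works; the estimate is applied directly and gives oscillation control (not an $L^\infty$ bound) with no absorption needed. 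Identifying this degenerate-robust H\"older estimate is the substantive content of the proof.
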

This result provides a surprising estimate on  $|\phi(x)-\phi(0)|$ since this is essentially the same as if $\phi$
were solving the first-order PDE $|D\phi(x)|^m = f (x) - \lambda^*$ and is a concrete evidence that the Laplacian term just improves the estimate. But two other points may be not be so natural: first we have such estimate on $\phi$ in $B_R$ by using a bound on $f$ on the same ball $B_R$, contrarily to a Bernstein estimate which would use a bound on $f$ on $B_{R+1}$. Finally, and this is not the least surprising point, this estimate is based on the {\em H\"older estimates} of Capuzzo Dolcetta, Leoni and Porretta \cite{capuzzo2010holder} which is recalled in the appendix (see Theorem~\ref{thm_holder}). We point out that the same result also holds if $f$ is in $L^q$ for $q >N/m$ by results of 
Dall’Aglio and Porretta \cite{dall2015local}.

\begin{proof}
Let $\gamma= \frac{m-2}{m-1}$, and for $R>0$, define
\begin{equation*}
v(x) = R^{-\gamma} \phi(Rx).
\end{equation*}
Since $\phi$ is, in particular, a solution of \eqref{erg_prob_intro} in $B_R$, $v$ is a solution of
\begin{equation*}
-\Delta v + |Dv|^m = R^{2-\gamma} (f(Rx) - \lambda^*) \leq R^{2-\gamma} M_R \quad \textrm{in } B_1,
\end{equation*}
 Now set $K_R = R^{2-\gamma} M_R$ and
\begin{equation*}
u(x) = K_R^{-\frac{1}{m}} v(x)
\end{equation*}
to obtain that $u$ is a solution of
\begin{equation}\label{rescaled_erg}
-K_R ^{\frac{1}{m} - 1} \Delta u + |Du|^m \leq 1 \quad\textrm{in } B_1.
\end{equation}
Observe that $m>2$ implies that $\gamma < 2$ and $\frac{1}{m} - 1<0$. Since $f$ is (in particular) coercive, we have that $K_R\to 0$ as $R\to \infty$. Hence, for $R$ sufficiently large we obtain $0 \leq K_R \leq 1$. 

We are now in position to apply the estimate of Theorem \eqref{thm_holder} (in the appendix) to Equation \eqref{rescaled_erg}: there exists a constant $C_1>0$, \emph{independent} of $R>0$, such that
\begin{equation*}
|u(x) - u(y)| \leq C_1 |x-y|^\gamma\quad \textrm{for all } x,y\in \overline{B}_1. 
\end{equation*}
In terms of $\phi$, for $|x|\leq 1, y=0$ this gives
\begin{equation*}
K_R^{-\frac{1}{m}}R^{-\gamma} |\phi(Rx) - \phi(0)| \leq C_1,
\end{equation*}
or, for $|x| \leq R$,
\begin{equation*}
|\phi(x) - \phi(0)|  \leq C_1 K_R^{\frac{1}{m}}R^\gamma = C_1 (R^{2-\gamma} M_R)^{\frac{1}{m}} R^{-\gamma} = C_1M_R^{\frac{1}{m}} R\; .
\end{equation*}

If (H1) holds, by taking, if necessary, a larger $C$ depending on $\phi(0)$, $f_0$ and $\lambda^*$, we conclude 
\begin{equation}\label{intermediate_UpPhi}
\phi(x) \leq C\varphi(R)^{\frac{1}{m}} R \leq C\varphi(R)^{\frac{1}{m}} \varphi(R)^{\frac{1}{\alpha}} =  C\varphi(R)^{\frac{\alpha  + m}{\alpha m}}.
 \end{equation}
\end{proof}

\begin{lemma}\label{lemma_superlinPhi}
Assume \emph{\ref{growth_f}}. Then, the solution $\phi$ of \eqref{erg_prob_intro} satisfies
\begin{equation*}
\lim\limits_{x\to +\infty} \frac{\phi(x)}{|x|} = +\infty.
\end{equation*}
\end{lemma}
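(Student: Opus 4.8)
The plan is to obtain a \emph{lower} bound on $\phi$ that grows superlinearly, complementing the upper bound from Lemma~\ref{lemma_upGrowthPhi}. The natural strategy is to compare $\phi$ with an explicit radial subsolution of the ergodic equation \eqref{erg_prob_intro} built from the lower bound $f(x)\geq f_0^{-1}\varphi(|x|)-f_0$ in \ref{growth_f}. Heuristically, since $-\Delta\phi$ only helps, $\phi$ should behave at least like a solution of the first-order equation $|D\psi|^m = f(x)-\lambda^*$, i.e.\ $|D\psi(x)|\sim (f(x)-\lambda^*)^{1/m}\gtrsim \varphi(|x|)^{1/m}$, so that along a ray $\psi(x)\sim\int_0^{|x|}\varphi(r)^{1/m}\,dr$. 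Using $\varphi(r)\geq\varphi_0^{-1}r^\alpha$, this integral is at least of order $|x|^{1+\alpha/m}$, which is superlinear; dividing by $|x|$ and letting $|x|\to\infty$ gives the claim.

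Concretely, I would first fix $R>1$ large and work in the annulus or ball $B_{2R}\setminus B_R$ (or $B_{2R}$), where $f(x)-\lambda^*\geq f_0^{-1}\varphi(R)-f_0-\lambda^* \geq \tfrac12 f_0^{-1}\varphi(R)=:\mu_R$ for $R$ large, since $\varphi(R)\to\infty$. On this region one seeks a radial classical subsolution $\chi$ of $-\Delta\chi+|D\chi|^m\leq f-\lambda^*$ of the form $\chi(x)=a_R(|x|-R)$ or a smoothed cone with slope $a_R\sim c\,\mu_R^{1/m}$: the gradient term contributes $a_R^m$ and the Laplacian contributes a term of order $a_R/r\leq a_R/R$, which is lower order since $a_R^m\sim\mu_R\geq \varphi_0^{-1}\varphi(R)/(2f_0)$ dominates $a_R/R$ for $R$ large (here one uses $\varphi(R)\geq\varphi_0^{-1}R^\alpha$ so that $\mu_R^{1/m}/R\to\infty$ when... — more carefully, one needs $a_R/R\leq \tfrac12\mu_R$, i.e.\ $\mu_R^{1/m}/(R\mu_R)\to 0$, which holds since $\mu_R\to\infty$). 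Then a standard viscosity comparison argument on the ball $B_{2R}$ — comparing $\phi$ with $\chi+\min_{\partial B_R}\phi$ on $B_{2R}\setminus \overline{B_R}$, using that $\chi=0$ on $\partial B_R$ and $\phi\geq\phi(0)-C_1 R M_R^{1/m}$ there by Lemma~\ref{lemma_upGrowthPhi} — yields $\phi(x)\geq \chi(x)+\phi(0)-C_1 R M_R^{1/m}$ on $B_{2R}$, hence in particular $\inf_{\partial B_{2R}}\phi\geq c\,R\,\mu_R^{1/m}+\phi(0)-C_1 R M_R^{1/m}$.

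To upgrade this single-scale estimate into the limit $\phi(x)/|x|\to+\infty$, I would iterate over dyadic scales: write, for $|x|=2^{k}$, a telescoping lower bound $\phi(x)\geq \sum_{j}\inf_{\partial B_{2^{j+1}}}\phi - \inf_{\partial B_{2^j}}\phi$ obtained by chaining the annular comparisons, each contributing roughly $2^j\mu_{2^j}^{1/m}\gtrsim 2^{j(1+\alpha/m)}$. Summing the geometric-type series gives $\phi(x)\gtrsim |x|^{1+\alpha/m}$ up to lower-order corrections, and dividing by $|x|$ and sending $|x|\to\infty$ finishes the proof. Alternatively, and perhaps more cleanly, one can avoid iteration by directly constructing a \emph{global} radial subsolution $\Psi(x)=\theta(|x|)$ with $\theta'(r)\sim c\,\varphi(r)^{1/m}$ on all of $\RN$, checking $-\Delta\Psi+|D\Psi|^m = -\theta''-\tfrac{N-1}{r}\theta'+(\theta')^m\leq f-\lambda^*$ (the curvature term $\tfrac{N-1}{r}\theta'$ and $\theta''$ being absorbed by the gap between $(\theta')^m\sim\tfrac12 f_0^{-1}\varphi(r)$ and $f-\lambda^*\geq f_0^{-1}\varphi(r)-C$ for $r$ large, after modifying $\theta$ near the origin), and then applying Theorem~\ref{thm_comparison} or a direct maximum-principle argument on balls to conclude $\phi\geq\Psi+C$ everywhere.

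The main obstacle I anticipate is controlling the Laplacian (curvature) terms $\theta''$ and $\tfrac{N-1}{r}\theta'$ in the candidate subsolution \emph{without} any upper bound on the growth of $\varphi$: if $\varphi$ grows very fast, $\theta'=\varphi^{1/m}$ may grow fast and $\theta''$ could be large. The resolution is that one only needs $\theta'$ to grow like $c\,\varphi(r)^{1/m}$ with a small constant $c$, leaving a definite fraction of $f-\lambda^*\sim f_0^{-1}\varphi$ to absorb the second-order terms; since one has freedom in the lower-order behavior of $\theta$ and can take $\theta$ concave-ish or piecewise smooth, and since $(\theta')^m$ has a full power $m>2$ of margin while $\theta''\lesssim \varphi'\varphi^{1/m-1}$, a careful but routine choice (e.g.\ making $\theta'$ a regularized version of $\min(\varphi^{1/m},\,$slope-capped pieces$)$, or simply noting $\varphi$ is only assumed increasing so one can replace it by a smooth dominated-from-below envelope) handles it. This is exactly the kind of computation the authors say they borrow from \cite{barles2016unbounded}, so I would lean on those estimates rather than redo them.
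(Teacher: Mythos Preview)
Your heuristic is correct, and a radial subsolution is easy to build --- in fact the ``main obstacle'' you flag is a non-issue: since you want a \emph{sub}solution, $-\theta''\le 0$ only helps when $\theta$ is convex, and taking simply $\theta'(r)=c\,r^{\alpha/m}$ (from the polynomial lower bound in \ref{growth_f}) avoids any regularity question about $\varphi$. The real gap is the comparison step. On the annulus $B_{2R}\setminus \overline{B_R}$ your cone $\chi(x)=a_R(|x|-R)$ is strictly positive on the outer sphere $\partial B_{2R}$, where you have no lower bound on $\phi$ beyond $\phi\ge 0$; the boundary inequality needed for comparison fails there, and the dyadic iteration inherits the same defect at every scale. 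The global route is equally blocked: Theorem~\ref{thm_comparison} is parabolic and would require $\Psi\le\phi$ at $t=0$, which is precisely the conclusion you seek, while a ``direct maximum-principle argument on balls'' still faces the outer-boundary problem. Two clean fixes are available: either invoke the \emph{elliptic} comparison on the exterior $\RN\setminus B_R$ from \cite{barles2016unbounded} (see the remark following Theorem~\ref{thm_comparison}), whose only boundary is the inner sphere, or center the comparison ball at the far point $x_0$ with a barrier vanishing on $\partial B_{|x_0|/2}(x_0)$, so that $\phi\ge 0$ suffices there.

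The paper takes a rescaled form of the second route. Arguing by contradiction with a sequence $y_n\to\infty$ along which $\phi(y_n)/|y_n|$ stays bounded, one rescales $\phi$ on $B_{|y_n|/2}(y_n)$ to obtain $v_n\ge 0$ on $B_1$ satisfying $-\varepsilon_n\Delta v_n+|Dv_n|^m\ge c>0$ with $\varepsilon_n\to 0$; after truncating by $K(1-|y|)$ to force boundedness, the half-relaxed liminf $\tilde v$ is a viscosity supersolution of the eikonal equation $|D\tilde v|^m\ge c$ in $B_1$ with zero boundary data. Comparison with the distance function then gives $\tilde v(0)>0$, contradicting $\tilde v(0)\le\limsup_n \phi(y_n)\big/\big(|y_n|\,\varphi(\tfrac12|y_n|)^{1/m}\big)=0$. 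The only boundary information used is $\phi\ge 0$ on $\partial B_{|y_n|/2}(y_n)$, which is free --- exactly the ingredient your origin-centered comparison lacks.
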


\begin{proof} 
In order to prove Lemma~\ref{lemma_superlinPhi}, we may assume without loss of generality that $\phi\geq 0$ in $\RN$ since $\phi$ is bounded from above and we can change $\phi$ in $\phi+C$ for some large constant $C>0$.

We argue by contradiction. Assume that there exists a sequence $(y_n)_{n\in \mathbb{N}}$ with $y_n \to +\infty$ as $n\to +\infty$ such that 
\begin{equation}\label{phi_low_contrad}
\limsup\limits_{n\to +\infty} \frac{\phi(y_n)}{|y_n|} = M < +\infty
\end{equation}
for some $M\geq 0$, and define for $y \in B_1$,
\begin{equation*}
v_n(y) = \frac{\phi(y_n + \frac{|y_n|}{2} y)}{|y_n| \varphi(\frac{1}{2}|y_n|)^{\frac{1}{m}}}.
\end{equation*}
The function $v_n$ satisfies
\begin{align*}
&{}-2^{2-m} |y_n|^{-1} \varphi(\frac{1}{2}|y_n|)^{\frac{1}{m} - 1} \Delta v_n (y) \, + \, |Dv_n|^m\\
&{}\quad = 2^{-m}\varphi(\frac{1}{2}|y_n|)^{-1}\left(f\left(y_n + \frac{|y_n|}{2}y\right) - \lambda^*\right)\\
&{}\quad \geq 2^{-m}\varphi(\frac{1}{2}|y_n|)^{-1}\left(f_0^{-1} \varphi(\frac{1}{2}|y_n|) - \lambda^*\right),
\end{align*}
where we have used \ref{growth_f} for the last inequality together with the fact that $|y_n + \frac{|y_n|}{2}y| \geq |y_n| - \frac{|y_n|}{2}|y| = \frac{1}{2}|y_n|$. Hence, if $\epsilon_n= 2^{2-m} |y_n|^{-1} \varphi(\frac{1}{2}|y_n|)^{\frac{1}{m} - 1}$, we have $\epsilon_n\to 0$ since $\varphi$ is coercive and $1 - \frac{1}{m} < 0$ and
\begin{equation}\label{vn-eqn}
-\epsilon_n \Delta v_n (y) \, + \, |Dv_n|^m \geq 2^{-m} f_0^{-1} -o_n(1) \quad \hbox{in  }B_1\; ,
\end{equation}
where $o_n(1)\to 0$ as $n\to \infty$.

In order to pass to the limit, we lack some $L^\infty$-bounf on $v_n$. To overcome this difficulty, we set
\begin{equation*}
\tilde{v}_n (y) = \min\left (v_n (y), K(1-|y|)\right) \quad \hbox{in  }B_1,
\end{equation*}
for some $K\geq 2^{-m} f_0^{-1}+1$. For $n$ large enough, the concave function $y \mapsto K(1-|y|)$ is also a supersolution of \eqref{vn-eqn} and therefore so is $ \tilde  v_n$ as the minimum of two supersolutions.

Hence, the half-relaxed limit $\tilde v = \liminf\limits_{n\to\infty}\,\!\!^* \,\tilde v_n$ is well defined and, by stability, we have in the limit $n\to +\infty$ that
\begin{equation*}
|D\tilde v|^m \geq 2^{-m} f_0^{-1} \quad \textrm{in } B_1
\end{equation*}
in the viscosity sense. For all $n$, $\tilde v_n \geq 0$, hence also $\tilde v\geq 0$. Thus $\tilde v$ is a supersolution of the eikonal equation $|Du|^m = 2^{-m} f_0^{-1}$ with homogeneous boundary condition since $\tilde v_n = 0$ on $\partial B_1$. The latter has the unique solution $\frac{1}{2}f_0^{-\frac{1}{m}}d(y, \partial B_1)$. Therefore, by comparison, $\tilde v(y) \geq \frac{1}{2}f_0^{-\frac{1}{m}}d(y, \partial B_1)$. However, using \eqref{phi_low_contrad} we have
\begin{equation*}
0 \leq \tilde v(0) \leq  \liminf\limits_{n\to +\infty} \frac{\phi(y_n)}{|y_n| \varphi(\frac{1}{2}|y_n|)} \leq \limsup\limits_{n\to +\infty} \frac{\phi(y_n)}{|y_n|}\cdot \lim\limits_{n\to +\infty} \frac{1}{\varphi(\frac{1}{2}|y_n|)^\frac{1}{m}} = M\cdot 0 = 0,
\end{equation*}
and this implies $0 = \tilde v(0) \geq \frac{1}{2}f_0^{-\frac{1}{m}}d(0, \partial B_1) = \frac{1}{2}f_0^{-\frac{1}{m}} > 0$, a contradiction.
\end{proof}

\begin{remark}
Lemmas \ref{lemma_upGrowthPhi} and \ref{lemma_superlinPhi} correspond to Propositions 3.3 and 3.4 in \cite{barles2016unbounded}, where the polynomial growth rates which are assumed for $f$ lead to polynomial rates for the solution $\phi$. In particular, the proof of Lemma \ref{lemma_superlinPhi} is based on the same idea as that of Proposition 3.4 in \cite{barles2016unbounded}.
\end{remark}

\subsection{Construction of sub- and supersolutions}\label{subSec_subSuper}
The proof of the asymptotic behavior of $u$ is done in two main steps: the first one consists in showing
that there exist two constants $C_1, C_2>0$ such that
\begin{equation*}
\phi(x) -C_1 \leq \liminf_{t\to +\infty} \left(u(x,t)  - \lambda^*t \right) \leq \limsup_{t\to +\infty} (u(x,t)  - \lambda^*t)  \leq \phi + C_2\quad\hbox{in  }\RN,
\end{equation*}
where $(\lambda^*, \phi)$ is a solution pair of \eqref{erg_prob_intro}. Then, in the second one, we show that this property implies the convergence.

The aim of this section is to build suitable sub- and supersolutions to perform the first step. We point out that we
face here the difficulty of the transition from $u(\cdot,0)= u_0$\textemdash which may have any growth at infinity\textemdash to $u(x,t)$, which looks like $\phi$ for large time.

\begin{lemma}\label{lemma_subSuper}
Assume \emph{\ref{growth_f}} and let $\phi$ be any solution of \eqref{erg_prob_intro}. Then, there exist an open, nonempty set $Q\subset \RN\times (0,+\infty)$, and functions $U\in USC(\RN\times(0,T))$
and $V\in LSC(Q)$ which are bounded from below sub- and supersolution of
	\begin{equation}\label{mod_ParabErg}
		w_t - \Delta w + |Dw|^m = f(x) - \lambda^*
	\end{equation}
in $\RN\times (0,+\infty)$ and in $Q$ respectively. Furthermore, $U$ and $V$, together with $Q$, satisfy the following:
	\begin{enumerate}[label=(\roman*)]
		\item\label{Q_props} For any compact $K\subset \RN$, there exists a $\hat{t}\geq 0$ such that $K\subset Q_t$ for all $t\geq \hat{t}$.
		\item\label{subSuper_locUnifConv} There exist constants $\sigma_1, \sigma_2>0$ such that $U(\cdot,t)\to\phi - \sigma_1$ and $V(\cdot, t) \to \phi + \sigma_2$ locally uniformly in $\RN$ as $t\to+\infty$.
		\item\label{super_Blows} If either $(x,t) \in \partial_p Q$ with $t>0$, or $x\in \partial Q_0$ and $t=0$, we have
			\begin{equation*}
				V(y,s) \to +\infty \quad\textrm{as }(y,s) \to (x,t),\ (y,s)\in Q\; . 
			% IMPROVE EXPRESSION...
			\end{equation*}
		\item\label{sub_Bounded} There exists $M>0$ such that, for all $t>0$,
			\begin{equation*}
			U(x,t) \leq t + M \quad\textrm{for all } x\in\RN.
			\end{equation*}
	\end{enumerate}
\end{lemma}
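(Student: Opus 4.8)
### Proof proposal for Lemma~\ref{lemma_subSuper}

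The plan is to build $U$ and $V$ separately by an ansatz of the form ``(scaled/shifted copy of $\phi$) $+$ (explicit function of $t$)'', choosing the $t$-profile so that the time derivative absorbs the error coming from the fact that $\phi$ solves the stationary ergodic equation while we need a subsolution/supersolution of the parabolic one. For the subsolution $U$, I would first try the simplest candidate $U(x,t)=\phi(x)-\sigma_1$ for a constant $\sigma_1>0$: since $\phi$ solves $-\Delta\phi+|D\phi|^m=f-\lambda^*$, this is an exact (stationary) solution of \eqref{mod_ParabErg}, it is bounded from below (recall we may take $\phi\ge 0$), and it trivially satisfies \ref{subSuper_locUnifConv}. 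The only issue is \ref{sub_Bounded} and the compatibility at $t=0$ with an arbitrary $u_0$; but $u_0$ plays no role in this lemma (it enters only when we compare in Section~\ref{sec_LT}), so the real constraints on $U$ are just \ref{subSuper_locUnifConv} and \ref{sub_Bounded}. Here \ref{sub_Bounded}, $U(x,t)\le t+M$, is the delicate one because $\phi$ has superlinear growth (Lemma~\ref{lemma_superlinPhi}), so a pure shift of $\phi$ cannot satisfy it. I would therefore take
\begin{equation*}
U(x,t) = \min\bigl(\phi(x)-\sigma_1,\ \psi(x,t)\bigr),
\end{equation*}
where $\psi$ is an explicit subsolution (e.g. of the form $A(t) + \xi(|x|)$ with $\xi$ concave and growing slowly, or even $\psi(x,t)=t+M - c|x|$ truncated suitably) chosen so that $\psi\le t+M$ everywhere and $\psi\ge\phi-\sigma_1$ on large balls for large $t$ — then $U$ is a subsolution as a min of two subsolutions, it agrees with $\phi-\sigma_1$ locally for $t$ large (giving \ref{subSuper_locUnifConv}), and it is $\le t+M$ globally (giving \ref{sub_Bounded}). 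The key computation is to check that the explicit $\psi$ is a subsolution of \eqref{mod_ParabErg}, which reduces to the superquadratic coercivity $|D\psi|^m$ dominating $\Delta\psi$ and $f(x)-\lambda^*$; this is where assumption \ref{growth_f} and the computations borrowed from \cite{barles2016unbounded} come in.

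For the supersolution $V$ and the set $Q$, the idea is dual but genuinely harder: we want $V$ to look like $\phi+\sigma_2$ for large $t$ on every fixed ball, yet we cannot hope to dominate an arbitrarily fast-growing $u_0$ by a fixed-growth function at $t=0$, nor can we afford $V$ to be a global supersolution on all of $\RN$ for small $t$. The resolution is to let $Q$ shrink as $t\to 0^+$ and to let $V$ blow up on $\partial_p Q$, which is exactly property \ref{super_Blows}. Concretely I would take $Q = \{(x,t): |x| < \rho(t)\}$ for an increasing $\rho$ with $\rho(t)\to+\infty$ (so \ref{Q_props} holds) and $\rho(0^+)$ possibly $0$, and set
\begin{equation*}
V(x,t) = \phi(x) + \sigma_2 + \theta\bigl(t, \rho(t)-|x|\bigr)
\end{equation*}
where $\theta\ge 0$ is an explicit barrier term engineered to (a) vanish on fixed balls as $t\to\infty$, giving \ref{subSuper_locUnifConv}; (b) tend to $+\infty$ as $(x,t)\to\partial_p Q$, giving \ref{super_Blows}; and (c) keep $V$ a supersolution of \eqref{mod_ParabErg} in $Q$. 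A natural choice is a state-constraint-type boundary-blowup profile, of the form $\theta = c\,(\rho(t)-|x|)^{-\beta}$ or a logarithmic/time-decaying variant — precisely the kind of function appearing in \eqref{stateconstraints} and studied in \cite{lasry1989nonlinear,barles2016unbounded}. One then verifies the supersolution inequality: since $\phi$ already accounts for $f-\lambda^*$, it suffices that $\theta_t - \Delta\theta + (\text{cross terms from }|D(\phi+\theta)|^m) \ge 0$; using convexity of $r\mapsto r^m$ one bounds $|D(\phi+\theta)|^m \ge |D\theta|^m + (\text{linear term in }D\phi)$ and absorbs the bad terms using the superquadratic growth of $|D\theta|^m$ near the boundary, where $\theta$ and $|D\theta|$ are large. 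The speed $\rho(t)$ must be tuned compatibly with $\beta$ and $m$ so that $\theta_t$ (which is negative where $\rho$ grows, helping us) does not overwhelm the spatial terms — this balancing is the technical heart of the construction.

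I expect the main obstacle to be the construction and verification of $V$ together with the choice of $Q$: unlike the subsolution, where a min with an explicit slowly-growing barrier suffices, the supersolution must simultaneously (i) be finite and a supersolution on a time-dependent domain, (ii) blow up on the lateral boundary at the right rate to dominate \emph{any} $u_0$ once we invoke the comparison Theorem~\ref{thm_comparison} on $Q$, and (iii) collapse onto $\phi+\sigma_2$ on compacta. Getting a single explicit profile that threads all three, with the parabolic scaling $\rho(t)$ matched to the superquadratic exponent $m$, is where the real work lies; the subsolution part, property \ref{Q_props}, and the boundedness-from-below claims are comparatively routine once the barrier is in hand. The verification that the barrier terms are viscosity sub/supersolutions (not merely classical ones where smooth) is handled as usual by the fact that our candidates are explicit smooth functions away from the min/the boundary, and min of supersolutions is a supersolution.
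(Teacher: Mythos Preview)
Your subsolution construction contains a genuine error: the minimum of two subsolutions is \emph{not} in general a subsolution (it is the \emph{maximum} of subsolutions, or the \emph{minimum} of supersolutions, that inherits the property --- as you in fact note at the very end). So $U=\min(\phi-\sigma_1,\psi)$ cannot be justified that way, and your explicit candidate $\psi(x,t)=t+M-c|x|$ also fails near the origin because $-\Delta(-c|x|)=c(N-1)/|x|$ blows up while $f(x)-\lambda^*$ stays bounded there. The paper gets the interpolation you are after with a \emph{single smooth} function
\[
U(x,t)=t+t_0+\xi\bigl(\phi(x)-(t+t_0)\bigr)-\psi(t),
\]
where $\xi(s)=s$ for $s\le 0$ and $\xi(s)\nearrow M$ as $s\to+\infty$. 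Thus $U=\phi$ where $\phi\le t+t_0$ (giving \ref{subSuper_locUnifConv}) and $U\le t+t_0+M$ everywhere (giving \ref{sub_Bounded}), with no min needed.

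For $V$ your plan (domain $\{|x|<\rho(t)\}$ with a distance-to-boundary blow-up barrier added to $\phi$) is a different geometry from the paper's and runs into a structural difficulty you gloss over: since $DV=D\phi+D\theta$ with $D\theta$ pointing radially, the cross term in $|D\phi+D\theta|^m-|D\phi|^m$ has no sign, and in the intermediate zone (away from $\partial_p Q$ but with $|x|$ large) it competes against $|D\phi|$, which is itself large and not aligned with $x/|x|$. The paper sidesteps this entirely by taking $Q=\{\phi(x)-(t+t_0)<b\}$ and
\[
V(x,t)=\phi(x)+\chi\bigl(\phi(x)-(t+t_0)\bigr)+\psi(t),
\]
with $\chi\equiv 0$ on $(-\infty,0]$ and $\chi(s)\to+\infty$ as $s\to b^-$. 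The point is that $DV=(1+\chi')D\phi$, so $|DV|^m=(1+\chi')^m|D\phi|^m$ and the ergodic equation factors out cleanly; the supersolution inequality then reduces to a one-variable ODE inequality for $\chi$ (Proposition~\ref{prop_chi}) together with the observation --- using Lemma~\ref{lemma_upGrowthPhi} and \ref{growth_f} --- that $\phi(x)>t+t_0$ forces $f(x)$ to be quantitatively large. This coupling of the domain and the barrier to the level sets of $\phi$, rather than to balls, is the key idea you are missing.
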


\begin{remark}
	The importance of Property \ref{super_Blows} of the lemma is that it allows us to construct a supersolution of \eqref{vhj_whole}-\eqref{initialData_whole} with the desired properties without assuming any restriction on the growth at infinity for the initial data.
\end{remark}

\begin{proof}
We begin with the construction of the supersolution $V$, and will later indicate the necessary changes to obtain $U$. While the constructions are similar, it is not the case that one can be obtained from the other.\\

\noindent\textit{Construction of the supersolution.} We first define
	\begin{equation*}
	v(x,t) = \phi(x) + \chi(\phi(x) - R(t)),
	\end{equation*}
where $R:\R\to\R$ and $\chi:(-\infty, b)\to\R$ are smooth functions to be chosen later on, as is the endpoint $b>0$.

In order to make this choice, we assume that $\chi' \geq 0$ in $(-\infty, b)$ and $R(t)\to +\infty$ as $t\to +\infty$; Property \textit{\ref{subSuper_locUnifConv}} of the lemma suggests to set 
	\begin{equation}\label{chi_flat}
	\chi(s) \equiv 0 \quad  \textrm{for all }s\leq 0.
	\end{equation}

The set on which the supersolution will be obtained is $Q=\{(x,t)\in \RN\times (0,+\infty) \ | \ \phi(x) - R(t) < b \}$. We will check that $Q$ has the required properties once the choice of $R(t)$ is made. Also, to have a suitable behavior of the supersolution on $\partial_p Q$ in order to have Property \textit{\ref{super_Blows}}, we require that 
	\begin{equation}\label{chiBlows}
	\chi(s)\to +\infty\quad\textrm{as } s\to b^-.
	\end{equation}

To continue to identify the required properties on $\chi$ we perform a preliminary computation, in which the argument $\phi(x) - R(t)$ in the derivatives of $\chi$ will be omitted to have simpler notations. Using \eqref{erg_prob_intro}, we have
	\begin{align}\label{supercomp}
		&v_t- \Delta v + |Dv|^m - f(x) + \lambda^*\nonumber\\
		&{}\quad= -\chi'\dot{R}(t) - (1 + \chi')(\lambda^* - f(x) + |D\phi|^m) - \chi''|D\phi|^2\nonumber\\
		&{}\qquad+ (1 + \chi')^m|D\phi|^m - f(x) + \lambda^*\nonumber\\
		&{}\quad= \chi' (-\dot{R}(t) + f(x) - \lambda^*) + \left[(1 + \chi')^m - (1 + \chi')\right]|D\phi|^m - \chi''|D\phi|^2\nonumber\\
		&{}\quad\geq \chi' (-\dot{R}(t) + f(x) - \lambda^*) + (1-\nicefrac{2}{m})\left[(1 + \chi')^m - (1 + \chi')\right]|D\phi|^m\nonumber\\
		&{}\qquad\ - \frac{m-2}{m}\left(\frac{\chi''}{[(1 + \chi')^m - (1 + \chi')]^\frac{2}{m}}\right)^\frac{m}{m-2}\nonumber\\
		&{}\quad\geq \chi' (-\dot{R}(t) + f(x) - \lambda^*) - \frac{m-2}{m}\left(\frac{\chi''}{[(1 + \chi')^m - (1 + \chi')]^\frac{2}{m}}\right)^\frac{m}{m-2}.
	\end{align}
Here we have used Young's inequality and $m>2$ to control the term containing $|D\phi|^2$ and dropped the resulting nonnegative expression in $|D\phi|^m$.

At this point it is perhaps convenient to outline our main argument: it follows from \eqref{chi_flat} and \eqref{supercomp} that if $\phi(x) - R(t) \leq 0$, then $v$ as defined at the outset is trivially a supersolution of \eqref{mod_ParabErg}. Thus, the crucial point of the proof is to show that if $\phi(x) - R(t) > 0$, then $f(x)$ is necessarily \emph{large}. The precise size of $f$ which is needed can be precisely quantified through \ref{growth_f} and Lemma~\ref{lemma_upGrowthPhi}, and this allows us to bound the right-hand side of \eqref{supercomp} from below.

To this end, we have to control more precisely the term containing $\chi''$ in the last line of \eqref{supercomp} by choosing $\chi$ in a right way. We want to have
\begin{equation}\label{PhiOfChi}
\frac{m-2}{m}\left(\frac{\chi''}{[(1 + \chi')^m - (1 + \chi')]^\frac{2}{m}}\right)^\frac{m}{m-2} \leq \Phi(\chi')
\end{equation}
for a suitable function $\Phi$ which gives a uniform bound from below for the expression
\begin{equation*}
\chi' (-\dot{R}(t) + f(x) - \lambda^*) - \Phi(\chi').
\end{equation*}
From \eqref{chi_flat} and \eqref{chiBlows}, we know $\chi'$ takes values from $0$ to $+\infty$, thus $\Phi$ must have at most linear growth. The correct choice for estimate \eqref{PhiOfChi} turns out to be $\Phi(s)=s^\beta$, where $\beta\in (0,1)$ can be taken arbitrarily close to $1$. We state and prove the existence of such $\chi$ in Proposition~\ref{prop_chi} at the end of this section in order to continue with our main argument.

Continuing from \eqref{supercomp} and using Proposition \ref{prop_chi}, we now have
\begin{equation}\label{supercomp2}
v_t- \Delta v + |Dv|^m - f(x) + \lambda^* \geq \chi' (-\dot{R}(t) + f(x) - \lambda^*) - (\chi')^\beta.
\end{equation}
From the previous considerations, we assume henceforth that $\phi(x)>R(t)$. Using Lemma \ref{lemma_upGrowthPhi}, we have,
\begin{equation*}
R(t) < \phi(x) \leq C\left(\varphi(|x|) ^{\frac{\alpha + m}{\alpha m}} +1 \right),
\end{equation*}
and from \ref{growth_f},
\begin{equation}\label{fdepOn_t_1}
f(x) \geq f_0^{-1}\left[\left(\frac{R(t) - C}{C}\right)^+\right]^{\frac{\alpha m}{\alpha + m}} - f_0.
\end{equation}
(In fact, reasoning as in Remark \ref{rmk_nonnegative}, the term $-f_0$ on the right may be omitted, but this is of little relevance to the computation.)
Thus, if $R(t)$ is chosen large compared to $\dot{R}(t)$, we have $ -\dot{R}(t) + f(x) - \lambda^* > 0$, and may use Young's inequality again to estimate
\begin{align}\label{supercomp3}
&{}v_t- \Delta v + |Dv|^m - f(x) + \lambda^*\nonumber\\
&{}\quad\geq (1-\beta) \left(\chi'(-\dot{R}(t) + f(x) - \lambda^*) - (f(x) - \lambda^* + 1)^{-\frac{\beta}{1-\beta}}\right)\nonumber\\
&{}\quad\geq -(1-\beta)(-\dot{R}(t) + f(x) - \lambda^*)^{-\frac{\beta}{1-\beta}}.
\end{align}
Furthermore, \eqref{fdepOn_t_1} implies that for some $0<\hat{\alpha}<\frac{\alpha m}{\alpha + m}$ and large enough $R(t)$ we have
\begin{equation*}
R(t)^{\hat{\alpha}} + 1 \leq -\dot{R}(t) + f(x) - \lambda^*.
\end{equation*}
Consequently,
\begin{equation}\label{big_f2}
(-\dot{R}(t) + f(x) - \lambda^*)^{-\frac{\beta}{1-\beta}} \leq (R(t)^{\hat{\alpha}} + 1)^{-\frac{\beta}{1-\beta}} \quad\textrm{for all } t>0.
\end{equation}
We write $\hat{\beta}=\frac{\beta}{1-\beta}$, and let
\begin{equation*}
\psi(t) = (1-\beta)\int_0^t (R(\tau)^{\hat{\alpha}} + 1)^{-\hat{\beta}} \,d\tau.
\end{equation*}

At this point we set $R(t) = t + t_0$. Thus, the set $Q= \{\phi(x) - (t+t_0) < b\}$\textemdash as defined at the beginning of the proof\textemdash is open and satisfies Property \textit{\ref{Q_props}} of the lemma. In fact, it is easy to see by using the continuity of $\phi$ that given a compact set $K\subset \RN$, $t_0>0$ can chosen large enough so that $K\subset Q_0$, and by construction, it is clear that if $(x,t)\in Q$ then $(y,s)\in Q$ for all $s\geq t$. We remark that Property~\textit{\ref{Q_props}} is required to take the limits in Property~\textit{\ref{subSuper_locUnifConv}}.

With respect to the previous computations, since $\dot{R}(t)=1$, $R(t)$ can be taken large while $\dot{R}(t)$ remains bounded, as was assumed. Furthermore, since $\hat{\beta}$ can be made arbitrarily large by taking $\beta$ close to 1 and $\hat{\alpha}>0$, the integral defining $\psi$ remains bounded as $t\to+\infty$. We write for further reference 
	\begin{equation}\label{c2Def}
	\sigma := (1-\beta)\int_0^{+\infty}((\tau + t_0)^{\hat{\alpha}} + 1)^{-\hat{\beta}} \,d\tau < +\infty.
	\end{equation}
Note also that $\psi$ is smooth and $\psi'(t) = (1-\beta)((t+t_0)^{\hat{\alpha}} + 1)^{-\hat{\beta}}>0$ for all $t>0$. 

To summarize, defining
	\begin{equation*}
	V(x,t) = v(x,t) + \psi(t),
	\end{equation*}
with $v$ and $\psi$ as above, yields, by \eqref{supercomp3} and \eqref{big_f2}, that
	\begin{equation*}
	\partial_t V - \Delta V + |DV|^m - f(x) + \lambda^* \geq 0 \quad\textrm{for all } (x,t) \in Q\; .
	\end{equation*}
Finally, since $\phi \in C^2(\RN)$ is bounded from below and both $\chi$ and $\psi$ are smooth and nonnegative, $V\in C^{2,1}(Q)$ is bounded from below.

It is easy to check that $V$ satisfies parts \ref{subSuper_locUnifConv} and \ref{super_Blows} of the lemma. Let $K\subset \RN$ be a compact set. Then, for large enough $t>0$, so that $t >\phi(x) - t_0$ for all $x\in K$, we have $\chi(\phi(x) - (t + t_0))=0$ by \eqref{chi_flat}. Thus, as $t\to+\infty$,
	\begin{equation*}
		V(x,t) = \phi(x) + (1-\beta)\int_0^t (\tau^{\hat{\alpha}} + 1)^{-\hat{\beta}} \,d\tau\to\phi(x) + \sigma,
	\end{equation*}
uniformly over $K$. On the other hand, for fixed $(x,t)\in \partial_p Q$ and any $(y,s)\in Q$, the definition of $Q$ implies that $\phi(y) - (s + t_0)$ approaches $b$ from below as $(y,s)\to (x,t)\in \partial_p Q$, and similarly for $(y,s)\to (x,0)$ if $x\in \partial Q_0$. This implies Property~\ref{super_Blows}.\\

\noindent\textit{Construction of the subsolution.} As mentioned earlier, the construction of the subsolution $U$ is analogous. We briefly go over the main points. Define
\begin{equation}
u(x,t)=t+t_0 + \xi (\phi(x) - (t+t_0))
\end{equation}
where again, $t_0>0$ and $\xi\in C^\infty(\RN)$ are to be chosen.

Motivated by Property \textit{\ref{subSuper_locUnifConv}}, we set 
\begin{equation}\label{xiSeqS}
\xi(s)=s \quad \textrm{for all }s\leq 0,
\end{equation}
and to obtain Property \textit{\ref{sub_Bounded}}, we will require that 
\begin{equation}\label{xi_at_infty}
\xi(s) \nearrow M\quad\textrm{as } s\to\infty
\end{equation}
for some $M>0$ to be determined.

Computing as in \eqref{supercomp}, we have
\begin{align*}
&{} u_t - \Delta u + |Du|^m - f(x) + \lambda^* \\
&{}\quad \leq -(1-\xi')(f(x) - \lambda^* - 1) + \frac{m-2}{m}\left(\frac{-\xi''}{[\xi' - (\xi')^m]^{\frac{2}{m}}}\right)^\frac{m}{m-2}.
\end{align*}
Combined with \eqref{xiSeqS}, this implies that $u$ is trivially a subsolution for $\phi(x) - (t+t_0) \leq 0$, and using Proposition \ref{prop_chi} we obtain
\begin{equation*}
u_t - \Delta u + |Du|^m - f(x) + \lambda^* \leq -(1-\xi')(f(x) - \lambda^* - 1) + (1-\xi')^\beta,
\end{equation*}
with $0<\beta<1$, $\beta$ arbitrarily close to 1. 

From here on we argue as before to conclude that
\begin{equation*}
U(x,t) := u(x,t) - (1-\beta)\int_0^t (\tau^{\hat{\alpha}} + 1)^{-\hat{\beta}} \,d\tau
\end{equation*}
is a subsolution in all of $\RN\times(0,\infty)$.
\end{proof}

% PROP CHI
We conclude this section by proving the existence of the functions $\chi$ and $\xi$ needed in the proof of Lemma~\ref{lemma_subSuper}.
\begin{proposition}\label{prop_chi}
Let $\beta \in (0,1)$.
\begin{enumerate}[label=\textit{(\alph*)}]
\item There exist $b>0$ and $\chi\in C^\infty((-\infty,b))$ satisfying \eqref{chi_flat} and \eqref{chiBlows} such that
\begin{align*}
\frac{m-2}{m}\left(\frac{\chi''(s)}{[(1 + \chi'(s))^m - (1 + \chi'(s))]^\frac{2}{m}}\right)^\frac{m}{m-2} \leq (\chi'(s))^\beta\\  
\textrm{ for all } s\in (0, b).
\end{align*}
\item There exists $\xi\in C^\infty(\R)$ satisfying \eqref{xiSeqS} and \eqref{xi_at_infty} such that
\begin{equation*}
\frac{m-2}{m}\left(\frac{-\xi''(s)}{[\xi'(s) - (\xi'(s))^m]^{\frac{2}{m}}}\right)^\frac{m}{m-2} \leq (1-\xi'(s))^\beta \quad\textrm{for all } s\in (0,+\infty).
\end{equation*}
\end{enumerate}
\end{proposition}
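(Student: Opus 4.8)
The plan is to construct $\chi$ and $\xi$ by prescribing their derivatives directly, solving the resulting first-order ODE, and checking the boundary behavior. I focus on part (a); part (b) is entirely analogous after the substitution $\eta = -\xi'$, since near $s=0$ we need $\xi'$ slightly below $1$ (so $1-\xi' $ small and positive), and the left-hand side has the same structure with $\xi' - (\xi')^m$ playing the role of $(1+\chi')^m - (1+\chi')$ up to sign.

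\textbf{Step 1: reduce to an ODE for $g:=\chi'$.} Since we want $\chi' \geq 0$ on $(-\infty,b)$, $\chi' \equiv 0$ on $(-\infty,0]$, and $\chi' \to +\infty$ as $s\to b^-$ (which forces $\chi \to +\infty$, giving \eqref{chiBlows}), it suffices to find $g = \chi' \in C^\infty$ with $g\equiv 0$ on $(-\infty,0]$, $g>0$ on $(0,b)$, $g\to+\infty$ as $s\to b^-$, and
\[
\frac{m-2}{m}\left(\frac{g'}{[(1+g)^m-(1+g)]^{2/m}}\right)^{\frac{m}{m-2}} \leq g^\beta \qquad\text{on }(0,b).
\]
Raising to the power $\frac{m-2}{m}$ and rearranging, this is equivalent (for $g'>0$) to the differential inequality
\[
g' \leq \left(\tfrac{m}{m-2}\right)^{\frac{m-2}{m}} g^{\frac{\beta(m-2)}{m}} \,\big[(1+g)^m-(1+g)\big]^{\frac{2}{m}}.
\]
It is cleaner to solve the corresponding \emph{equality} and take $\chi$ to be exactly that solution. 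Writing $G(g)$ for the right-hand side, we must solve $g' = G(g)$, i.e. $\int \frac{dg}{G(g)} = s + \text{const}$, and we need to arrange that (i) the solution leaves $0$ smoothly at $s=0$ and (ii) it blows up at a \emph{finite} $b$.

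\textbf{Step 2: behavior near $g=0$ — smooth take-off.} As $g\to 0^+$, $(1+g)^m-(1+g) = g\big((1+g)^{m-1}+\cdots\big) \sim (m-1)g$, so $G(g) \sim c\, g^{\frac{\beta(m-2)}{m}}\cdot g^{2/m} = c\, g^{\frac{\beta(m-2)+2}{m}}$ for a constant $c>0$. Let $\theta := \frac{\beta(m-2)+2}{m}$. Since $\beta<1$ one checks $\theta < \frac{(m-2)+2}{m} = 1$, and $\theta>0$; the ODE $g'=c\,g^\theta$ with $\theta\in(0,1)$ has the solution $g(s) = \big(c(1-\theta)s\big)^{1/(1-\theta)}$ near $s=0$, which vanishes at $s=0$. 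The delicate point is $C^\infty$ matching with the zero function on $(-\infty,0]$: the exponent $1/(1-\theta)$ need not be an integer. I would handle this by not taking the exact power-law solution but instead defining $g$ on a small interval $(0,\varepsilon)$ by hand as a $C^\infty$ function that is identically zero on $(-\infty,0]$, strictly positive on $(0,\varepsilon)$, and satisfies the \emph{strict} inequality $g' < G(g)$ there — this is possible because one can make $g$ grow as slowly as one likes near $0$ (e.g. flat to infinite order at $0$, like $e^{-1/s}$), which makes $g'$ tiny while $G(g)$ is only polynomially small, so the inequality holds with room to spare. Then on $[\varepsilon,\infty)$ one continues by solving the ODE $g' = G(g)$ with $g(\varepsilon)$ equal to the value just reached; local existence and smoothness of $G$ away from $0$ give a $C^\infty$ continuation, and since $G>0$ there $g$ is increasing.

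\textbf{Step 3: finite-time blow-up giving $b<\infty$.} For large $g$, $(1+g)^m - (1+g) \sim g^m$, hence $G(g) \sim c'\, g^{\frac{\beta(m-2)}{m}}\cdot g^{2} = c'\, g^{2 + \frac{\beta(m-2)}{m}}$. The exponent is $>1$, so $\int^{\infty}\frac{dg}{G(g)} < \infty$; therefore the solution of $g'=G(g)$ started from $g(\varepsilon)$ reaches $+\infty$ at the finite time $b := \varepsilon + \int_{g(\varepsilon)}^{\infty}\frac{dg}{G(g)}$. This gives the finite endpoint $b>0$ and $g=\chi'\to+\infty$ as $s\to b^-$. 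Setting $\chi(s) := \int_0^s g(\tau)\,d\tau$ yields $\chi\in C^\infty((-\infty,b))$ with $\chi\equiv 0$ on $(-\infty,0]$ (so \eqref{chi_flat} holds), and $\chi(s)\to+\infty$ as $s\to b^-$ because $g\to+\infty$ forces the integral to diverge (so \eqref{chiBlows} holds); the required inequality holds with equality on $[\varepsilon,b)$ and strictly on $(0,\varepsilon)$.

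\textbf{Part (b) and the main obstacle.} For (b), set $h := 1-\xi'$, needing $h\equiv 0$ on $(-\infty,0]$ (so $\xi(s)=s$ there, giving \eqref{xiSeqS}), $h\in(0,1)$ on $(0,\infty)$, and $h\to 1^-$ as $s\to+\infty$ so that $\xi' = 1-h$ stays positive and $\xi(s)$ increases to a finite limit $M = t_0 + \int_0^\infty (1-h)\,d\tau$... — actually one wants $\xi' \to 0$, i.e. $h\to 1$, and $\int_0^\infty(1-h)<\infty$; this is the mirror image and the same ODE analysis applies on the bounded range $h\in(0,1)$, where now one does \emph{not} need finite-time blow-up, only that $h$ increases and stays below $1$, and $\int_0^\infty(1-h)\,ds<\infty$, which one arranges by making $1-h$ decay, e.g., polynomially. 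The main obstacle I anticipate is precisely Step 2: producing an \emph{honestly $C^\infty$} function that detaches from the flat zero solution while respecting the differential inequality, since the natural similarity solution has a fractional exponent. The resolution — choosing $g$ flat to infinite order at $0$ and verifying the inequality holds because $G(g)$ dominates $g'$ there — is the crux, and everything else is a routine ODE separation-of-variables computation together with the two elementary exponent comparisons $\theta<1$ (for take-off) and $2+\frac{\beta(m-2)}{m}>1$ (for blow-up), both of which use only $m>2$ and $\beta\in(0,1)$.
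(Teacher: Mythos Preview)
Your Step~3 contains a genuine error: the claim that ``$g\to+\infty$ forces the integral to diverge'' is false. From $g' = G(g) \sim c'g^{p}$ near infinity with $p = 2 + \tfrac{\beta(m-2)}{m}$, separation of variables gives the blow-up profile $g(s) \sim C(b-s)^{-1/(p-1)}$; since $p>2$ (as you yourself noted), the exponent $1/(p-1)$ is strictly less than~$1$, so $\chi(s) = \int_0^s g(\tau)\,d\tau$ \emph{converges} as $s\to b^-$. Your $\chi$ stays bounded and \eqref{chiBlows} fails. The paper's construction avoids this precisely because it does \emph{not} solve the equality: it prescribes $\chi'' = C(\chi')^{\beta_1}(1+\chi')^{\beta_2}$ with an independent parameter $\beta_2\in(1,2)$, so the large-$g$ exponent $\beta_1+\beta_2$ can be kept at most~$2$, which makes $g$ blow up at least like $(b-s)^{-1}$ and forces $\chi\to+\infty$. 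To repair your argument you would have to solve $g'=G_1(g)$ for some $G_1\leq G$ with $G_1(g)\sim g^{q}$, $q\in(1,2]$, at infinity --- which is essentially the paper's move and is exactly the freedom your ``turn the inequality into an equality'' shortcut throws away.

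There is also a smaller gap: your gluing at $s=\varepsilon$ gives $g'(\varepsilon^-) < G(g(\varepsilon)) = g'(\varepsilon^+)$, so $g$ is not $C^1$ there and hence $\chi\notin C^2$, let alone $C^\infty$; a smooth transition between the hand-built piece and the ODE solution is needed. (On the positive side, your treatment of the $C^\infty$ matching at $s=0$ via an infinitely-flat take-off is more careful than the paper's own argument, which simply asserts that the nontrivial Osgood-type solution is smooth at the origin.)
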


\begin{proof}

% REVIEW PROOF
We begin by proving part \emph{(a)}, then indicate the necessary changes to obtain part \emph{(b).} Motivated by the considerations in the proof of Lemma \ref{lemma_subSuper}, we define $\chi(s) = 0$ for $s\leq 0$ (this is \eqref{xiSeqS}) and for $s>0$ take $\chi$ to be the solution of the ODE
\begin{align}\label{chi_ODE}
&\chi'' = C(\chi')^{\beta_1}(1 + \chi')^{\beta_2}\quad\textrm{in } (0, b),\nonumber\\
&\chi(0)=\chi'(0)=0,
\end{align}
where $\beta_1, \beta_2>0$ are to be chosen, and $b>0$ will determined by $\beta_1$ and $\beta_2$. Note that $\eqref{chi_ODE}$ can be seen as a first-order ODE. Taking $\beta_1<1$ avoids the trivial solution $\chi\equiv 0$, since in this case \eqref{chi_ODE} fails to meet the Osgood condition for uniqueness (see e.g., \cite{agarwal1993uniqueness}). Furthermore, if $\beta_2$ is chosen so that $\beta_1 + \beta_2 > 1$, we achieve the blow-up condition \eqref{chiBlows}. It can also be shown that 
\begin{equation}\label{chi_props}
\chi'(s), \chi''(s)>0\quad\textrm{for all }s>0. 
\end{equation}
In particular, this shows the use of Young's inequality in \eqref{supercomp} is justified.

Using \eqref{chi_ODE}, it remains to prove that for some $C>0$ and any $0<\beta<1$, 
\begin{equation}\label{chiineq2}
\left(\frac{C(\chi'(s))^{\beta_1}(1 + \chi'(s))^{\beta_2}}{\left((1 + \chi'(s))^m - (1+\chi'(s))\right)^\frac{2}{m}}\right)^\frac{m}{m-2} \leq (\chi'(s))^\beta.
\end{equation}
We will proceed by considering different ranges of $s>0$.

Assume first that $s$ is small, say $s<\delta$ for some small $\delta>0$. By the convexity of $s\mapsto s^m$, we have $(1+\chi')^m  - (1+ \chi') \geq (m-1)\chi'$ for $\chi'>0$, and since $(1+\chi')^{\beta_2\frac{m}{m-2}}\rightarrow 1$ for $\chi'\to 0$, we control this factor with the constant $C$ on the left of $\eqref{chiineq2}$. Thus taking a suitably small $C>0$ we have
\begin{align*}
\left(\frac{C(\chi')^{\beta_1}(1+\chi')^{\beta_2}}{\left((1 + \chi')^m - (1+\chi')\right)^\frac{2}{m}}\right)^\frac{m}{m-2} &{}\leq \left(C\frac{(\chi')^{\beta_1}}{(\chi')^\frac{2}{m}}\right)^\frac{m}{m-2}\\
&{}\leq (\chi')^{\frac{\beta_1  m - 2}{m-2}}.
\end{align*}
We thus obtain \eqref{chiineq2} for any $\beta\in (0,1)$ by choosing $\beta_1\in (0,1)$ appropriately. In the proof of Lemma \ref{lemma_subSuper}, we require to have $\beta$ arbitrarily close to $1$. This is achieved by taking $\beta_1$ close to $1$.
Incidentally, the computation also shows that to have control by a linear term in \eqref{chiineq2} (i.e, $\beta=1$) we would require $\beta_1 = 1$, which is impossible if we are to have a nontrivial solution of \eqref{chi_ODE}.

Assume now that $\delta \leq s \leq  b-\delta$. This implies that $\chi'(s)>\bar{\delta}$ for some $\bar{\delta}>0$. In this case, the expressions on either side of \eqref{chiineq2} are continuous, hence remain bounded. Thus \eqref{chiineq2} is obtained by choosing an appropriately small $C$.

Finally, we address the case $s>b-\delta$. Since $\delta>0$ is small and $\chi'(s)\to+\infty$ as $s\to b^-$, this amounts to checking \eqref{chiineq2} in the the limit $\chi'\to\infty$. Using only that $m>2$, a straightforward computation shows that setting $1<\beta_2<2$ in \eqref{chi_ODE}, the left-hand side of \eqref{chiineq2} vanishes as $\chi'\to+\infty$, while the right-hand side goes to infinity. We have thus shown \eqref{chiineq2} holds for all $s\in (0,b)$. Using \eqref{chi_flat}, we conclude for all $s\in (-\infty, b)$.

% XI PROP...

For part \emph{(b)}, define $\xi(s)$ for $s>0$ as the solution of
\begin{align}\label{xiODE}
\xi'' = -C(1-\xi')^{\eta_1}(\xi')^{\eta_2}\quad\textrm{in }(0,+\infty),\qquad \xi(0)=0,\ \xi'(0)=1,
\end{align}
with $0<\eta_1<1$, $\eta_2>0$ to be chosen. 

It can be shown that a nontrivial solution exists and satisfies
\begin{equation}\label{xi_props}
\xi(s)>0,\ -C\leq\xi''(s)<0\quad\textrm{and}\quad0<\xi'(s)<1, \qquad\textrm{for all }s>0,
\end{equation} 
for some $C>0$, while taking $\eta_1 + \eta _2 > 1$ gives \eqref{xi_at_infty}.

An  analysis similar to that of part \emph{(a)} gives that \eqref{xiODE} and \eqref{xi_props} imply
\begin{equation*}
\left(\frac{-\xi''}{[\xi' - (\xi')^m]^{\frac{2}{m}}}\right)^\frac{m}{m-2} \leq (1-\xi')^{\frac{\eta_1m-2}{m-2}}.
\end{equation*}
Thus, choosing $\eta_1$ appropriately we conclude.
\end{proof}

%The conclusion of \emph{(b)}, for the same value of $\beta\in (0,1)$ is obtained similarly.
%
%Computing as in \eqref{supercomp}, it is seen that $u$ is a subsolution trivially for $\phi(x) - (t+t_0) \leq 0$, whereas, if $\phi(x) - (t+t_0) >0$, we have
%\begin{align*}
%&{} u_t - \Delta u + |Du|^m - f(x) + \lambda^* \\
%&{}\quad \leq -(1-\xi')(f(x) - \lambda^* - 1) + \frac{m-2}{m}\left(\frac{-\xi''}{[\xi' - (\xi')^m]^{\frac{2}{m}}}\right)^\frac{m}{m-2}.
%\end{align*}
%Assuming then that $0<\phi(x)- (t+t_0)<\delta$ for some small $\delta>0$, we use \eqref{xiODE} and \eqref{xi_props} to obtain
%\begin{equation*}
%\left(\frac{-\xi''}{[\xi' - (\xi')^m]^{\frac{2}{m}}}\right)^\frac{m}{m-2} \leq (1-\xi')^{\frac{\eta_1m-2}{m-2}}.
%\end{equation*}
%Thus, choosing $\eta_1$ close to 1 we obtain
%\begin{equation*}
%u_t - \Delta u + |Du|^m - f(x) + \lambda^* \leq -(1-\xi')(f(x) - \lambda^* - 1) + (1-\xi')^\beta,
%\end{equation*}
%with $0<\beta<1$, $\beta$ arbitrarily close to 1 

\section{Large-time behavior}\label{sec_LT}

In this final section, we use the existence of sub- and supersolutions given by Lemma~\ref{lemma_subSuper} to perform the two steps of the convergence proof.

\begin{lemma}\label{lemma_unifBounded}
Assume $f\in \locLip(\RN)$ satisfies \emph{\ref{growth_f}} and $u_0 \in C(\RN)$ is bounded from below. Then $u(x,t) - \lambda^*t$ is bounded over compact sets, uniformly with respect to $t>0$.
\end{lemma}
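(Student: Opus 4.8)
\textbf{Proof plan for Lemma~\ref{lemma_unifBounded}.}
The plan is to sandwich $u(x,t)-\lambda^* t$ between translates of the sub- and supersolutions $U$ and $V$ furnished by Lemma~\ref{lemma_subSuper} and then invoke the general comparison result, Theorem~\ref{thm_comparison}. Since adding a constant to $u_0$ changes $u$ by the same constant (and $u$ is nonnegative by Remark~\ref{rmk_nonnegative}), it suffices to bound $u-\lambda^*t$ from above on compacts uniformly in $t$; the lower bound is the easier direction and is obtained symmetrically. For the upper bound, fix a compact set $K\subset\RN$. First I would show that $w(x,t):=u(x,t)-\lambda^* t$ is a subsolution of the modified equation \eqref{mod_ParabErg} in $\RN\times(0,+\infty)$, which is immediate from \eqref{vhj_whole}. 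The goal is then to compare $w$ with a suitable time-shifted copy of the supersolution $V$ of Lemma~\ref{lemma_subSuper}, on the domain $Q$ (or a time-translate of it) where $V$ is defined.

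The key step is to absorb the mismatch between $u_0$ — which may have arbitrary growth at infinity — and $V(\cdot,0)$, using Property~\ref{super_Blows}: $V$ blows up to $+\infty$ as one approaches $\partial_p Q$. Concretely, I would argue as follows. By Property~\ref{Q_props}, given any compact $K$ there is $\hat t\ge 0$ with $K\subset Q_t$ for $t\ge\hat t$; more importantly, by choosing the parameter $t_0$ in the construction of $V$ large enough, one can arrange that a prescribed large ball $B_\rho$ satisfies $B_\rho\subset Q_0$, i.e.\ $\phi(x)-t_0<b$ on $\overline{B_\rho}$. On such a ball, $V(\cdot,0)=\phi+\psi(0)=\phi$ is a fixed continuous function, whereas $V(x,t)\to+\infty$ as $x\to\partial Q_0$. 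Now compare $w$ and $V+C$ on the truncated cylinder $Q\cap(\RN\times(0,T])$: on $\{t=0\}\cap Q_0$ we need $u_0(x)\le V(x,0)+C=\phi(x)+C$, which holds on $\overline{B_\rho}$ for $C$ large (depending on $\rho$ and $u_0|_{\overline{B_\rho}}$) but \emph{not} globally; this is where the blow-up of $V$ at $\partial Q_0$ saves the day, since near $\partial_pQ$ we have $\limsup(w-(V+C))=-\infty$. Thus the hypothesis of Theorem~\ref{thm_comparison} ($\limsup_{(y,s)\to(x,t)}(w-(V+C))\le 0$ on $\partial_p(Q\cap\{t\le T\})$, apart from the top $t=T$ which is handled as usual since the equation holds up to $T$) is met, giving $w\le V+C$ on $Q$. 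In particular, letting $t\to+\infty$ and using Property~\ref{subSuper_locUnifConv}, $V(\cdot,t)\to\phi+\sigma_2$ locally uniformly, so $\limsup_{t\to+\infty}w(x,t)\le\phi(x)+\sigma_2+C$ locally uniformly, and since $w$ is continuous and finite for each finite $t$, it is bounded above on $K$ uniformly in $t>0$.

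For the lower bound I would use the subsolution $U$ of Lemma~\ref{lemma_subSuper}, which is defined on all of $\RN\times(0,+\infty)$ and, by Property~\ref{sub_Bounded}, satisfies $U(x,t)\le t+M$; more to the point, $U(\cdot,t)\to\phi-\sigma_1$ locally uniformly. Since $u_0$ is bounded from below, say $u_0\ge 0$, and $U(x,0)=t_0+\xi(\phi(x)-t_0)\le t_0+M$ is bounded above by a constant, the function $U(x,t)-(t_0+M)$ lies below $u_0$ at $t=0$; both $w$ and $U-(t_0+M)$ solve/sub-solve \eqref{mod_ParabErg}, so Theorem~\ref{thm_comparison} (now with no boundary issue since the domain is the full $\RN\times(0,+\infty)$ and both functions are bounded below) yields $w\ge U-(t_0+M)$ in $\RN\times(0,+\infty)$, hence $\liminf_{t\to+\infty}w(x,t)\ge\phi(x)-\sigma_1-(t_0+M)$ locally uniformly, and again continuity in $t$ on $[0,T]$ closes the uniform bound on $K$.

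I expect the main obstacle to be the careful bookkeeping around Property~\ref{super_Blows} and the truncation in $T$: one must verify that the comparison in Theorem~\ref{thm_comparison} legitimately applies on the (possibly unbounded in $x$, bounded in $t$) set $Q\cap(\RN\times(0,T])$, checking the boundary condition separately on the lateral parabolic boundary $\partial_pQ$ (where $V\to+\infty$ rescues us regardless of $u_0$'s growth), on the initial slice $Q_0$ (where a finite additive constant $C$ depending only on $u_0$ restricted to a compact set suffices), and observing that the bound obtained is uniform in $T$ so that letting $T\to+\infty$ is harmless. A secondary point is to make sure the additive constants produced ($C$, $\sigma_1$, $\sigma_2$, $t_0$, $M$) are independent of $t$, which they are by construction, so that the final bound on $\sup_{t>0}\sup_{x\in K}|u(x,t)-\lambda^* t|$ is genuinely uniform.
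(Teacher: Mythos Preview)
Your approach is essentially the paper's, and it works, but there is one point you gloss over that the paper makes explicit and that actually closes a small gap in your argument. You split the initial slice $Q_0$ into a ball $B_\rho$ (where a finite additive constant $C$ handles $u_0$) and ``near $\partial Q_0$'' (where the blow-up of $V$ handles it); as written this leaves an intermediate annulus $Q_0\setminus B_\rho$ unaccounted for, since $V(\cdot,0)\to+\infty$ only as one approaches $\partial Q_0$, not uniformly on the complement of $B_\rho$. The fix is the observation you never state: by Lemma~\ref{lemma_superlinPhi}, $\phi$ is coercive, hence $Q_0=\overline{\{\phi<t_0+b\}}$ is \emph{compact}. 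Once you know this, the initial comparison is a one-liner---$u_0$ is bounded on $Q_0$, $V(\cdot,0)\ge\phi$ is bounded below there, so a single constant $C$ gives $V(\cdot,0)+C\ge u_0$ on all of $Q_0$---and Property~\ref{super_Blows} is needed only on the lateral part of $\partial_p Q$ (where indeed $u(x,t)-\lambda^*t$ may be arbitrarily large because $u_0$ has arbitrary growth). This is exactly how the paper proceeds. Two smaller remarks: your truncation in $T$ is unnecessary, since Theorem~\ref{thm_comparison} is stated for $T\in(0,+\infty]$; and your identity $V(\cdot,0)=\phi$ holds only where $\phi\le t_0$, though the inequality $V(\cdot,0)\ge\phi$ (which is all you need) holds throughout $Q_0$.
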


\begin{proof}
The proof follows by comparing $u(x,t)-\lambda^*t$, which solves \eqref{mod_ParabErg}, to the sub- and supersolutions constructed in Lemma \ref{lemma_subSuper}, to which we refer the reader for notation and properties. 

Let $K$ be a compact subset of $\RN$. As noted earlier, if $t_0>0$ is large enough, we have $K\subset Q_0 = \overline{\{\phi(x) - t_0 < b \}}$, while $Q_0$ is also a compact subset of $\RN$, by the coercivity of $\phi$ (see Lemma \ref{lemma_superlinPhi}). Hence, for a large enough $C\in \R$, we have
\begin{equation*}
V(x,0) + C \geq u_0\quad\textrm{ in }Q_0.
\end{equation*}
Moreover, by construction, we have $K\times (0,+\infty) \subset Q$.

Since, by construction, $V(x,t)\to +\infty$ on the lateral boundary of $Q$, by comparison (Theorem \ref{thm_comparison}), we have that
\begin{equation*}
V(x,t) + C \geq u(x,t) - \lambda^*t\quad \textrm{for all }(x,t)\in Q.
\end{equation*}
Hence, this inequality is true on $K\times (0,+\infty)$.

Furthermore, by Lemma \ref{lemma_subSuper} \emph{\ref{subSuper_locUnifConv}},
\begin{equation*}
V(x,t) + C \to \phi(x) + \sigma + C, \quad\textrm{uniformly over }K\textrm{ as }t\to+\infty. 
\end{equation*}
Therefore, we have
%taking also $C > \sup_{K\times[0,T]} u(x,t) - \lambda^*t$ for some large $T>0$ given by the previous limit, we have
\begin{equation*}
\limsup_{t\to+\infty}\ \!\!\!^*\,  \left(u(x,t) - \lambda^*t \right) 
\leq \phi(x) + \sigma + C \quad \textrm{for all }(x,t)\in K\times (0,+\infty).
\end{equation*}

Similarly, we use the subsolution $U$ from Lemma \ref{lemma_subSuper} to obtain the lower bound. Recalling that $u_0$ may be assumed nonnegative, by Lemma \ref{lemma_subSuper} \emph{\ref{sub_Bounded}}, we have
\begin{equation*}
U(x,0) - M \leq 0 \leq u_0(x) \quad\textrm{for all } x\in \RN,
\end{equation*}
hence, by comparison,
\begin{equation*}
U(x,t) - M \leq u(x,t) - \lambda^*t \quad\textrm{for all }(x,t)\in \RN\times(0,+\infty).
\end{equation*}
Thus for large $t>0$, by Lemma \ref{lemma_subSuper} \emph{\ref{subSuper_locUnifConv}},
\begin{equation*}
\phi(x) - \sigma - M \leq u(x,t) - \lambda^*t\quad\textrm{for all }x\in K,
\end{equation*}
and finally,
\begin{equation*}
\phi(x) - \sigma - M - C \leq \liminf_{t\to+\infty}\ \!\!\!_*\,  \left(u(x,t) - \lambda^*t \right)\quad\textrm{for all }x\in K, t>0,
\end{equation*}
by taking $C>0$ as before.
\end{proof}

An immediate consequence of Lemma \ref{lemma_unifBounded} is the following weaker convergence result.
\begin{corollary}
Under the assumptions of Lemma \ref{lemma_unifBounded},
\begin{equation*}
\frac{u(x,t)}{t}\to \lambda^*\quad\textrm{locally uniformly in }\RN\textrm{ as }t\to+\infty.
\end{equation*}
\end{corollary}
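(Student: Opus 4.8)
The plan is to obtain the corollary as an immediate consequence of Lemma~\ref{lemma_unifBounded}, which already contains all the analytic content. Fix an arbitrary compact set $K\subset\RN$. By Lemma~\ref{lemma_unifBounded}, $u(x,t)-\lambda^*t$ is bounded on $K$ uniformly in $t>0$, so there are constants $c_K\le C_K$, depending on $K$ but not on $(x,t)$, such that
\begin{equation*}
c_K\ \le\ u(x,t)-\lambda^*t\ \le\ C_K\qquad\textrm{for all }x\in K,\ t>0.
\end{equation*}
Concretely, such constants can be read off from the comparison with the functions $U$ and $V$ of Lemma~\ref{lemma_subSuper} performed in the proof of Lemma~\ref{lemma_unifBounded}, using that $\phi$, $\psi$ and $s\mapsto\chi(\phi(\cdot)-t_0)$ are all bounded on $K$ and that $\chi'\ge 0$ (so the $\chi$-term is largest at $t=0$); but for the present argument it suffices to invoke the \emph{statement} of Lemma~\ref{lemma_unifBounded}.

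Dividing the displayed inequality by $t>0$ yields
\begin{equation*}
\lambda^*+\frac{c_K}{t}\ \le\ \frac{u(x,t)}{t}\ \le\ \lambda^*+\frac{C_K}{t}\qquad\textrm{for all }x\in K,\ t>0,
\end{equation*}
and letting $t\to+\infty$ the two outer terms converge to $\lambda^*$ uniformly in $x\in K$ — indeed they are independent of $x$. Hence $u(x,t)/t\to\lambda^*$ uniformly on $K$, and since $K$ was an arbitrary compact subset of $\RN$, this is precisely the asserted locally uniform convergence.

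There is essentially no obstacle to overcome: the only nontrivial input is the uniform-in-$t$ boundedness provided by Lemma~\ref{lemma_unifBounded}, and the passage from that bound to the statement about $u(x,t)/t$ is elementary. The single point that deserves care is that the bounds $c_K,C_K$ are genuinely independent of $t$ — which is exactly what the wording of Lemma~\ref{lemma_unifBounded} guarantees — so that dividing by $t$ and sending $t\to+\infty$ produces convergence that is uniform on $K$, not merely pointwise.
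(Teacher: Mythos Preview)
Your argument is correct and matches the paper's approach: the paper states this corollary as ``an immediate consequence of Lemma~\ref{lemma_unifBounded}'' without further proof, and your derivation---dividing the uniform-in-$t$ bound by $t$ and letting $t\to+\infty$---is precisely the intended elementary step.
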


\subsection{Main result}
The rest of this section is devoted to the proof of Theorem \ref{thm_fullConv}, stated in the introduction.

\begin{proof}[Proof of Theorem \ref{thm_fullConv}]

%HALF-RELAXED + SMP
\emph{Step 1.} For simplicity we write $v(x,t):=u(x,t) - \lambda^*t$. By Lemma \ref{lemma_unifBounded}, $v$ is locally bounded for all $t>0$, hence the half-relaxed limit
\begin{equation*}
\bar{v}(x) = \limsup_{t\to\infty}\ \!\!\!^*\, v(x,t)
\end{equation*}
is well-defined for all $x\in\RN$. By the stability of viscosity solutions, $\bar{v}$ is a subsolution of \eqref{erg_prob_intro} in all of $\RN$.  Furthermore, adding an appropriate constant to either $\phi$ or $\bar{v}$ so that they coincide at some point, we have by the Strong Maximum Principle that 
\begin{equation*}
\bar{v}(x) = \phi(x) + \hat{c} \quad\textrm{for all }x\in\RN,
\end{equation*}
for some $\hat{c}\in\RN$ (see, e.g., the proof of Theorem 3.1 in \cite{barles2016unbounded} for details).

%COMPACTNESS
\emph{Step 2.} Fix $\hat{x}\in\RN$. By the definition of half-relaxed limits, there exists a sequence $(x_n, t_n)\in\RN\times(0,+\infty)$ such that $x_n\to \hat{x}, \ t_n\to\infty$, and $v(x_n,t_n)\to\bar{v}(\hat{x})$. Consider $v_n(\cdot):=v(\cdot, t_{n} - 1)$. Again by Lemma \ref{lemma_unifBounded}, the sequence $(v_n)$ is uniformly bounded over compact sets. Furthermore, by the local gradient bound of Theorem \ref{gradBound} (in the appendix), it is also uniformly equicontinuous over compact sets. Thus, there exists $w_0\in C(\RN)$ such that, given a compact $K\subset \RN$, there exists a subsequence $(v_{n'})$ such that $v_{n'}\to w_0$ uniformly over $K$ as $n'\to\infty$.

Consider now
\begin{equation*}
w_{n'}(x,t) = v(x, t + t_{n'} - 1)\quad\textrm{for }(x,t)\in\RN\times(0,+\infty).
\end{equation*}
The sequence $(w_{n'})$ is again uniformly equicontinuous (in both space and time variables) due to Corollary \ref{gradBoundCor}. Thus, given also $T>0$, we have that 
\begin{equation}\label{defW}
w_{n'}\to w \textrm{ uniformly over } K\times[0,T] \quad\textrm{ for some }w \in C(\RN\times (0,+\infty))
\end{equation}
(again passing to a subsequence if necessary---this is omitted for ease of notation). By the definition of the half-relaxed limit $\bar{v}$, $w(x,t)\leq\bar{v}(x)=\phi(x) + \hat{c}$ for all $(x,t)\in\RN\times(0,+\infty)$, and by construction,
\begin{equation}
w(\hat{x},1) = \lim_{n'} v(\hat{x},t_{n'}) = \phi(\hat{x}) + \hat{c}.
\end{equation}
Hence, by the parabolic Strong Maximum Principle (see Lemma \ref{strongMaxP} and Remark \ref{strongMaxP_rmk} in the appendix), $w - (\phi + \hat{c})$ is constant in $\RN\times[0,1]$. In particular, $w_0(x)=\phi(x) + \hat{c}$ for all $x\in\RN$. 

To summarize, we have obtained that given any compact $K\subset\RN$, there exists a sequence $t_{n'} - 1\to\infty$ such that
\begin{equation}\label{unifConvSubseq}
v(\cdot, t_{n'} - 1)\to\phi+\hat{c}\quad\textrm{ uniformly over }K\textrm{ as }t_{n'} -1 \to\infty.
\end{equation}
In the next step, we will use \eqref{unifConvSubseq} for a suitable $K$ that is chosen larger than the set on which the uniform convergence will hold.

%UNIFORM CONVERGENCE

\emph{Step 3.} Let $\epsilon>0$ and $\widehat{K}\subset\RN$ be compact (this is the set on which we will prove the uniform convergence stated in the Theorem). For this final part of the proof we employ many of the elements of Lemma \ref{lemma_subSuper}, to which we refer the reader.

For $R>0$ and $(x,t)\in\RN\times(0,+\infty)$, we define
\begin{equation*}
V_R(x,t) = \phi(x) + \hat{c} + \chi(\phi(x) + \hat{c} - (t+R)) + \int_R^{t+R} (\tau^{\hat{\alpha}} + 1)^{-\hat{\beta}}\,d\tau + \frac{1}{R},
\end{equation*}
with $\chi, \hat{\alpha}$ and  $\hat{\beta}$ are as in the definition of the supersolution $V$. Thus, arguing as in the proof of Lemma \ref{lemma_subSuper}, $V_R$ is a supersolution of \eqref{mod_ParabErg} in $Q=\{\phi(x) + \hat{c} - (t + R) < b \}$.

% REFERECE NEW LEMMAS

%Recall that in the growth estimate \eqref{lowerPolyGrowth_phi}, $\gamma=\frac{\alpha}{m} + 1>1$. Hence, 

We take $R>0$ large enough so that
	\begin{equation}\label{notSoBigRsuper}
		\phi(x) + \hat{c} - R < b \quad \textrm{for all } x\in \hat{K},
	\end{equation}
where $b$ is given by \eqref{chiBlows}, i.e., such that $\chi(s) \to +\infty$ for $s \to b^-$. Thus, by \eqref{notSoBigRsuper} we have that
$\hat{K}\subset Q_0 = \overline{Q}\cap \{t=0\}$, and thus $\hat{K} \times (0,+\infty) \subset Q$. Furthermore, arguing as in the proof of Lemma \ref{lemma_subSuper},
	\begin{equation}\label{VR_blows}
		V_R(y,0)\to +\infty\quad\textrm{as }y \to x,\textrm{ for all }x\in Q_0,
	\end{equation}
and 
	\begin{equation}\label{VR_blows_2}
		V_R(y,s)\to +\infty\quad\textrm{as }(y,s)\to (x,t),\textrm{ for all } (x,t)\in \partial_p Q, \ t>0.
	\end{equation}
Recall that, as a consequence of Lemma \ref{lemma_superlinPhi}, $Q_0$ is compact (see also the proof of Lemma \ref{lemma_unifBounded}). We can thus use \eqref{unifConvSubseq} from Step 2 for $K=Q_0$ to obtain that, for large enough $n'$,
	\begin{equation*}
		v(x,t_{n'} - 1) < \phi(x) + \hat{c} + \frac{1}{R} \quad\textrm{for all }x\in Q_0
	\end{equation*}
This gives that $v(x,t_{n'} - 1) \leq V_R(x,0)$ in $Q_0$, by construction. 
Together with \eqref{VR_blows}, \eqref{VR_blows_2}, this implies
	\begin{equation}\label{VR_at_parBoundary}
		v(x,t_{n'} - 1 + t) \leq V_R(x,t)\quad\textrm{for all }(x,t) \in \partial_p Q.
	\end{equation}

Since $v$ and $V_R$ are a solution and a supersolution of \eqref{mod_ParabErg}, respectively, and satisfy \eqref{VR_at_parBoundary}, by comparison (Theorem \ref{thm_comparison}) we have 
	\begin{equation}\label{VR_compared}
		v(x,t + t_{n'}-1) \leq V_R(x,t)\quad\textrm{in }Q.
	\end{equation}
In particular, \eqref{VR_compared} holds in $\hat{K}\times (0,+\infty)$.

We note that $\int_R^{+\infty} (\tau^{\hat{\alpha}} + 1)^{-\hat{\beta}}\,d\tau + \frac{1}{R} = o(1)$ as $R\to+\infty$. Thus, arguing as in the proof of Lemma \ref{lemma_subSuper} \textit{\ref{subSuper_locUnifConv}}, we take $R>0$ larger still so that,
\begin{align}\label{VR_on_K}
\widehat{V}_R(x) :={}&\phi(x) + \hat{c} + \chi(\phi(x) + \hat{c} - R) + \int_R^{+\infty} (\tau^{\hat{\alpha}} + 1)^{-\hat{\beta}}\,d\tau + \frac{1}{R}\nonumber\\
 <{}&\phi(x) + \hat{c} + \epsilon\quad \textrm{for all }x\in \widehat{K}.
\end{align}
By \eqref{chi_props}, $t\mapsto\chi(\phi(x) + \hat{c} - (t+R))$ is nonincreasing, hence 
\begin{equation}\label{V_R_monoton}
V_R(x,t)\leq\widehat{V}_R(x)\quad\textrm{ for all }t>0,\ \hat{x}\in\widehat{K}. 
\end{equation}
Therefore, combining \eqref{VR_compared}, \eqref{VR_on_K} and \eqref{V_R_monoton}, we obtain
\begin{equation}\label{upper_Conv_Bound}
v(x,t + t_{n'} - 1) \leq \phi(x) + \hat{c} + \epsilon \quad \textrm{for all }x\in \widehat{K}, \ t>0.
\end{equation}

To obtain the lower bound corresponding to \eqref{upper_Conv_Bound}, we define the analogue of $V_R$ based on the subsolution $U$ from Lemma \ref{lemma_subSuper}. For $(x,t)\in\RN\times(0,+\infty)$, define
\begin{equation*}
U_R(x,t) = t + R + \xi(\phi(x) + \hat{c} - (t+R)) - \int_{R}^{t+R} (\tau^{\hat{\alpha}} + 1)^{-\hat{\beta}}\,d\tau - \frac{1}{R},
\end{equation*}
with $\xi, \hat{\alpha}$ and $\hat{\beta}$ as before. Consider now $\underline{v}(x) = \liminf_{t\to\infty}\ \!\!\!_* \,v(x,t).$ Arguing as in Step 1, it follows that
\begin{equation*}
\underline{v}(x) = \phi(x) + m^-,
\end{equation*}
for some $m^-\in\R$. By definition of the half-relaxed limit, we have
\begin{equation}\label{lowerBound_vn}
\phi(x) + m^- \leq v(x,t_{n'}-1) \quad\textrm{for all }x\in \RN.
\end{equation}
for sufficiently large $n'$. Recall from the construction of Lemma \ref{lemma_subSuper} that $\xi(s)\leq M$ for all $s\in\R$, for some $M\in \R$. We set $R>0$ large enough so that $\hat{K}\subset B_R$, $U_R$ is a subsolution of \eqref{mod_ParabErg} and, using Lemma \ref{lemma_superlinPhi},
\begin{equation}\label{bigRsub} 
\phi(x) - R \geq M-m^- \quad\textrm{for all }x\in \RN\backslash B_R,
\end{equation}
in addition to the requirements made in the construction of $V_R$. We then have, by \eqref{lowerBound_vn} and \eqref{bigRsub},
\begin{align*}
U_R(x,0) \leq{}& R + \xi(\phi(x) + \hat{c} - R) \leq R + M \nonumber\\
\leq{}&\phi(x) + m^- \leq v(x, t_{n'}-1)\quad\textrm{for all }x\in\RN\backslash B_R.
\end{align*}
By \eqref{unifConvSubseq} we have, for sufficiently large $n'$,
\begin{equation*}
v(x,t_{n'}-1) > \phi(x) + \hat{c} - \frac{1}{R} \quad\textrm{for all }x\in \overline{B}_R
\end{equation*}
Recalling that $\xi(s)\leq s$ for all $s\in\R$, we have
\begin{equation*}
U_R(x,0) \leq R + \xi(\phi(x) + \hat{c} - R) -\frac{1}{R} \leq \phi(x) + \hat{c} - \frac{1}{R} \quad\textrm{for all }x\in\RN.
\end{equation*}
Thus we obtain
\begin{equation}\label{UR_at_0}
U_R(x,0)\leq v(x,t_{n}-1)\quad\textrm{for all }x\in\RN.
\end{equation}
By comparison (Theorem \ref{thm_comparison}), this implies that
\begin{equation}
U_R(x,t) \leq v(x,t +t_{n}-1)\quad\textrm{for all }x\in\RN, \ t>0.
\end{equation}
From this point on, we argue as we did before for $V_R$. We remark that the analogue of \eqref{V_R_monoton} (for a similarly defined $\widehat{U}_R(x)$) is now given by the fact that the function $t \mapsto t + R + \xi(\phi(x) + \hat{c} - (t + R))$ is nondecreasing, since $0\leq\xi'(s)\leq 1$ for all $s\in\R$, by \eqref{xi_props}. Thus, for large enough $n'$, depending on $R>0$, we have
\begin{equation*}
v(x,t+t_{n'}-1)\geq \phi(x) + \hat{c} - \epsilon, \quad \textrm{for all }x\in \widehat{K}, \ t>0,
\end{equation*}
and with this we conclude.
\end{proof}

\appendix
\section{Appendix}
In this appendix we present some estimates and results used in the previous sections.
\begin{theorem}[Hölder estimate]\label{thm_holder}
For $R>0$, let $u\in USC (\overline{B}_R)$ be a subsolution of 
\begin{equation*}
-\mathrm{tr}(A(x)D^2u) + |Du|^m = f_1(x) \quad\textrm{in } B_R,
\end{equation*}
where for each $x\in B_R$, $A(x)$ is a nonnegative symmetric matrix such that the map $x\mapsto A(x)$ is bounded and continuous in $B_R$, and $f_1\in C(B_R)$. Then $u\in C^{0,\gamma}(\overline{B}_R)$ and
\begin{equation*}
|u(x) - u(y)| \leq K_1|x-y|^\gamma,
\end{equation*}
where $\gamma = \frac{m-2}{m-1}$ and $K_1$ depends only on $m, \|A\|_{L^\infty(B_R)}$ and $\|d_{\partial B_R}^{m(1-\gamma)} f_1^+\|_{L^\infty(B_R)}$.
\end{theorem}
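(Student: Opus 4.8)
The plan is to prove the apparently one-sided bound
\[
u(x)-u(y)\le K_1\,|x-y|^\gamma\qquad\text{for every ordered pair }x,y\in\overline{B}_R;
\]
since $|x-y|^\gamma$ is symmetric, exchanging $x$ and $y$ then yields the reverse inequality, hence the two-sided estimate. The mechanism is entirely first order: the coercive term $|Du|^m$ with $m>2$ prevents $u$ from oscillating faster than the Hölder rate $\gamma$, while the (possibly degenerate) second-order term is the only reason one does not get the full Lipschitz bound. In particular, no nondegeneracy of $A$ is used, only $\|A\|_{L^\infty(B_R)}<\infty$; and since the equation is invoked at a single point through a smooth test function touching $u$ from above, the subsolution hypothesis alone suffices.

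First I would reduce to an interior estimate by localization: for balls $B_{2d}(z)\subset B_R$ one proves a Hölder bound on $B_{d}(z)$ whose constant depends on $f_1$ only through $d^{m^*}\sup_{B_{2d}(z)}f_1^{+}$, where $m^*=\tfrac{m}{m-1}$. The point here is the scaling identity $m(1-\gamma)=m^*$ together with $(\gamma-1)m=\gamma-2=-m^*$, which makes the exponent $\gamma=\frac{m-2}{m-1}$ exactly compatible with the weight $d_{\partial B_R}^{m(1-\gamma)}f_1^{+}$ in the statement: choosing $d(z)\sim\operatorname{dist}(z,\partial B_R)$ one has $\sup_{B_{2d}(z)}f_1^{+}\le d(z)^{-m^*}\|d_{\partial B_R}^{m^*}f_1^{+}\|_{L^\infty(B_R)}$, so the local constants are uniform in $z$, and a chaining argument across overlapping balls produces the global bound on $\overline{B}_R$.

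For the interior estimate (after rescaling, on $B_1$ with $f_1$ now bounded) I argue by contradiction, assuming $M:=\sup\{u(x)-u(y)-K_1|x-y|^\gamma-\Lambda(|x-x_0|^2+|y-x_0|^2)\}>0$, with $x_0$ a suitably chosen centre, $\Lambda$ a fixed parameter keeping the maximum inside $B_1$, and $K_1$ large and to be fixed. On the diagonal the bracket is $\le 0$, so the maximum point $(\bar x,\bar y)$ is interior and $\bar x\ne\bar y$; testing the subsolution inequality at $\bar x$ with the smooth function $\varphi(x)=u(\bar y)+K_1|x-\bar y|^\gamma+\Lambda(|x-x_0|^2+|\bar y-x_0|^2)$ gives $-\mathrm{tr}(A(\bar x)D^2\varphi(\bar x))+|D\varphi(\bar x)|^m\le f_1(\bar x)$. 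Writing $s=|\bar x-\bar y|$, one has $|D\varphi(\bar x)|\ge c_0K_1 s^{\gamma-1}-C\Lambda$, while the concavity $\gamma<1$ makes the radial part of $D^2\varphi$ negative and yields $-\mathrm{tr}(A(\bar x)D^2\varphi(\bar x))\ge -C_N\|A\|_{L^\infty}K_1 s^{\gamma-2}-C\Lambda\|A\|_{L^\infty}$. Since $M>0$ forces $K_1 s^\gamma<\operatorname{osc} u$, hence $s$ small once $K_1$ is large, the terms $c_0K_1 s^{\gamma-1}$ and $K_1 s^{\gamma-2}$ dominate the $\Lambda$-contributions; using $(\gamma-1)m=\gamma-2=-m^*$, both leading terms carry the same power $s^{-m^*}$ and we obtain
\[
\big[(c_0K_1)^m-C_N\|A\|_{L^\infty}K_1\big]\,s^{-m^*}\le C\big(\|A\|_{L^\infty}+\sup f_1^{+}\big).
\]
As $m>2$, taking $K_1$ large makes the left coefficient dominate, forcing $s$ to be bounded \emph{below} by a quantity growing with $K_1$; combined with the upper bound $s<(\operatorname{osc} u/K_1)^{1/\gamma}$ this is impossible for $K_1$ large enough, the desired contradiction. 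Tracking the constants shows $K_1$ can be taken depending only on $m$, $\|A\|_{L^\infty}$ and $\sup f_1^{+}$ on the ball, which after the localization becomes the claimed dependence.

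The step I expect to be the real obstacle is the bookkeeping at the boundary, i.e. producing a single constant $K_1$ that sees $f_1$ only through $\|d_{\partial B_R}^{m(1-\gamma)}f_1^{+}\|_{L^\infty(B_R)}$: this forces one to match the scaling of the equation, the exponent $\gamma$, and the weight simultaneously, and is precisely what the identity $m(1-\gamma)=m^*$ makes possible. A secondary delicate point—specific to coercive equations—is that the contradiction is closed without any a priori bound on $\operatorname{osc} u$, which requires observing that a subsolution of a coercive equation cannot oscillate more than the data allow; in all the applications in this paper, however, $u$ is continuous on a bounded set and hence trivially bounded, so this refinement is not needed.
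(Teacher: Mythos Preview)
The paper does not prove this theorem; it is recorded in the appendix as a known result, with the sentence ``The Theorem follows immediately from Lemmas 2.1 and 2.2 in \cite{capuzzo2010holder} (see also \cite{barles2010short}).'' Your sketch is essentially a reconstruction of the argument in those references: doubling of variables with the test function $K_1|x-y|^\gamma$ plus a quadratic localization, the scaling identity $(\gamma-1)m=\gamma-2=-m^*$ that puts $|D\varphi|^m$ and $\mathrm{tr}(A D^2\varphi)$ at the same power $s^{-m^*}$ of $s=|\bar x-\bar y|$, and the observation that for $m>2$ the coefficient $(c_0K_1)^m$ dominates $C\|A\|_\infty K_1$ once $K_1$ is large, forcing $s$ to be large while the domain forces $s\le\mathrm{diam}(B_1)$. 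Using only the subsolution inequality at $\bar x$ (and nothing at $\bar y$) is indeed the right move here and is exactly what the references do.

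The one point that is more than a detail is the one you flag at the end: choosing $\Lambda$ so that the maximum of $u(x)-u(y)-K_1|x-y|^\gamma-\Lambda(|x-x_0|^2+|y-x_0|^2)$ is attained in the interior, and even ensuring this supremum is finite, both require an a priori control on $\operatorname{osc}u$, which a bare $u\in USC(\overline{B}_R)$ does not carry. In the cited references this is a separate preliminary step (Lemma~2.1 in Capuzzo Dolcetta--Leoni--Porretta): a short barrier/comparison argument exploiting the coercive term $|Du|^m$ shows that any subsolution satisfies $\sup u-u(x)\le C$ on interior balls, with $C$ depending only on $m$, $\|A\|_\infty$ and the weighted norm of $f_1^+$. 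This is not cosmetic, since the statement promises $K_1$ independent of any norm of $u$; it should be treated as an ingredient of the proof, not deferred to the applications.
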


The Theorem follows immediately from Lemmas 2.1 and 2.2 in \cite{capuzzo2010holder} (see also \cite{barles2010short}). For equations set on a bounded domain, these lemmas lead to global Hölder estimates assuming the boundary is sufficiently regular. The result as stated is sufficient for our purposes.

More importantly, we remark that a crucial feature of the above estimate is that it depends on $\|A\|_{L^\infty(B_R)}$, but not on any \emph{lower bound} for the matrix $A$. It is in fact valid in the completely degenerate, or first-order, case.

\begin{theorem}[Local Gradient Bounds]\label{gradBound}
Let $R, \tau>0$.
	\begin{enumerate}[label=(\textit{\alph*})]
	\item\label{gradBerg} There exists $K_2>0$ depending only on $m$ and $N$, such that for any $R \geq R' + 1 > 0$ the solution of \eqref{erg_prob_intro} satisfies
		\begin{equation*}
			\sup_{B_{R'}} |D\phi| \leq K_2(1 + \sup_{B_R} |f|^{\frac{1}{m}} + \sup_{B_R} |Df|^{\frac{1}{2m-1}}).
		\end{equation*}
	
	\item\label{gradBparab} If $u$ is a solution of
		\begin{align}
			u_t - \Delta u + |Du|^m = f(x) &{}\quad\textrm{in } \Omega \times (0,T], \label{HJO}\\
			u(x, 0) = u_0(x) &{}\quad\textrm{in } \overline{\Omega},\label{HJO-i}
		\end{align}
	where $\Omega$ is a domain of $\R^N$ such that $B_{R+1}\subset\Omega$ and $f\in \locLip(\RN)$, then $u$ is Lipschitz continuous in $x$ in $B_R\times[\tau, +\infty)$ and $|Du(x,t)|\leq L$ for a.e. $x\in B_R$, for all $t\geq\tau$, where $L$ depends on $R$ and $\tau$. Moreover this result holds with $\tau=0$ if $u_0$ is locally Lipschitz continuous in $\Omega$.
	\end{enumerate}
\end{theorem}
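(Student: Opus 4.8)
The plan is to prove both gradient bounds by the classical Bernstein method, adapted to the viscosity solution setting, since $m>2$ makes the gradient term strongly coercive. For part~\ref{gradBerg}, I would fix $R\ge R'+1$, choose a cutoff $\zeta\in C^\infty_c(B_R)$ with $\zeta\equiv 1$ on $B_{R'}$ and $|D\zeta|,|D^2\zeta|\le C$, and study the function $w=\zeta(x)^{2}|D\phi|^{2}$. Formally differentiating \eqref{erg_prob_intro}, setting $p=D\phi$, and writing the equation for $|p|^2$, one gets at an interior maximum point $x_0$ of $w$ the relation $Dw(x_0)=0$ and $-\Delta w(x_0)\le 0$; expanding these and using $-\Delta\phi=f-\lambda^*-|D\phi|^m$ together with the lower bound on $\Delta(|D\phi|^2)$ from $|D^2\phi|^2\ge \frac1N(\Delta\phi)^2$ yields an inequality in which the leading term is $+c\,\zeta^2|D\phi|^{m+2}$ (coming from the coercive $|Du|^m$ term, which is where $m>2$ is essential) balanced against terms of order $|D\phi|^{m+1}$, $|D\phi|^2$, $|Df|\,|D\phi|$ and lower. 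Since $m+2$ strictly dominates all other powers, one concludes $\zeta(x_0)^2|D\phi(x_0)|^{m+2}\le C(1+\sup_{B_R}|f|\cdot|D\phi(x_0)|^{2}+\sup_{B_R}|Df|\cdot|D\phi(x_0)|)$, whence $|D\phi(x_0)|$ is bounded by $C(1+\sup|f|^{1/m}+\sup|Df|^{1/(2m-1)})$ after splitting cases according to which term on the right dominates. This gives the bound on $B_{R'}$ since $\zeta\equiv 1$ there. To make this rigorous for viscosity solutions I would either invoke the known interior $C^{1}$ and $C^{1,\theta}$ regularity for such equations (the right-hand side being continuous and $m>2$ subquadratic-dual, so $\phi\in C^2_{loc}$ as stated in the excerpt) and then the above computation is classical, or alternatively quote the Bernstein-type estimates already in the literature (e.g. Lions, or Capuzzo-Dolcetta--Leoni--Porretta, or Barles).

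For part~\ref{gradBparab} the plan is the parabolic analogue: now work with $w=\zeta(x)^{2}|Du|^{2}+$ (a term handling the time cutoff), where $\zeta$ is spatial, supported in $B_{R+1}\subset\Omega$, equal to $1$ on $B_R$, and one introduces a factor like $\min\{t/\tau,1\}^{k}$ (or $t^k$) to localize away from $t=0$ when $u_0$ is only continuous. Differentiating \eqref{HJO} and computing $\partial_t w-\Delta w$ at an interior-in-space, positive-time maximum of $w$ produces, by exactly the same algebra as above, a leading coercive term $+c\,\zeta^2|Du|^{m+2}$ dominating the gradient of $f$, the cutoff error terms (which carry the dependence on $R$), and the time-cutoff terms (which carry the dependence on $\tau$, through $\tau^{-1}$). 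Balancing powers as before bounds $|Du|$ on $B_R\times[\tau,\infty)$ by a constant $L=L(R,\tau)$. When $u_0\in \locLip(\Omega)$ one does not need the time cutoff: $|Du_0|$ is controlled on $B_{R+1}$, one adds $\sup_{B_{R+1}}|Du_0|^2$ to the comparison and the maximum of $w$ is either attained at $t=0$ (controlled by the data) or at a positive time (controlled as above), giving the bound up to $\tau=0$. Again, at the level of rigor needed, I would phrase the argument through the standard viscosity-solution Bernstein method (doubling in the space variable, or working with sup-convolutions) or simply cite the corresponding estimates from \cite{tabet2010large}, \cite{barles2010large} or \cite{barles2016unbounded}, since the estimate and its proof are by now standard for $m>2$.

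The main obstacle, and the only genuinely delicate point, is the rigorous justification of the Bernstein computation for merely continuous viscosity solutions: one cannot differentiate the equation directly. The standard remedy is to double variables, i.e. to estimate $u(x,t)-u(y,t)-L|x-y|\zeta(x,t)-\eta|x-y|^2$ and show the penalization is nonpositive for $L$ large, which transfers the formal computation above into an argument using the maximum principle for semicontinuous functions (Ishii's lemma) rather than classical second derivatives; alternatively, one regularizes by sup-/inf-convolution and passes to the limit. Both routes are classical but notationally heavy, so in the paper I would carry out the formal Bernstein estimate, remark that $\phi\in C^2_{loc}$ (already established) makes part~\ref{gradBerg} fully rigorous as written, and for part~\ref{gradBparab} invoke the cited interior parabolic regularity together with the doubling-of-variables justification, giving the details only to the extent that the $R$- and $\tau$-dependence of $L$ is transparent.
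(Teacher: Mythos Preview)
Your approach is correct and in fact goes further than the paper, which does not prove Theorem~\ref{gradBound} at all: the paper simply records that both parts are classical, citing \cite{ichihara2012large}, \cite{lasry1989nonlinear}, \cite{lions1980resolution}, \cite{lions1985quelques} for part~\ref{gradBerg} and \cite{lions1982generalized} together with the weak Bernstein method of \cite{barles2017local} for part~\ref{gradBparab}. Your Bernstein sketch is precisely the content of those references, so there is no methodological divergence.

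One technical slip worth fixing: in part~\ref{gradBerg} you identify the dominant term at the interior maximum as $c\,\zeta^2|D\phi|^{m+2}$, attributing it to the coercive gradient term. That power is not quite right and would yield $|D\phi|\le C|Df|^{1/(m+1)}$ rather than the stated $|Df|^{1/(2m-1)}$. The correct leading contribution comes from the Hessian term you already mention, namely $|D^2\phi|^2\ge \tfrac{1}{N}(\Delta\phi)^2$: substituting $\Delta\phi=|D\phi|^m-(f-\lambda^*)$ from the equation turns this into a term of order $\zeta^2|D\phi|^{2m}$ (minus lower-order pieces involving $|f|\cdot|D\phi|^m$). Balancing $|D\phi|^{2m}$ against $|Df|\cdot|D\phi|$ then gives exactly $|D\phi|\le C|Df|^{1/(2m-1)}$, and against $|f|\cdot|D\phi|^m$ gives $|D\phi|\le C|f|^{1/m}$, matching the statement. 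The same correction applies verbatim to the parabolic case. With this adjustment your outline is complete; the remark that $\phi\in C^2_{\mathrm{loc}}$ already makes part~\ref{gradBerg} classical, and your doubling-of-variables remedy for part~\ref{gradBparab} is exactly the weak Bernstein method the paper invokes.
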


Both results in Theorem \ref{gradBound} are classical. The estimate in \ref{gradBerg} appears as stated in \cite{ichihara2012large}, but can also be inferred from the results of \cite{lasry1989nonlinear} (see also \cite{lions1980resolution}, \cite{lions1985quelques}).

The conclusion of \ref{gradBparab} can also be adapted from the results of \cite{lions1982generalized} (that recovers some of the results from \cite{lions1980resolution}). For a proof closer to our setting---namely, within the context of viscosity solutions, via the \emph{weak 
Bernstein method}---we refer the reader to Theorem 4.1 in \cite{barles2017local}. The viscous Hamilton-Jacobi Equation (\ref{HJO}) is easily shown to satisfy the structure conditions required therein. 

Furthermore, we remark that in this last reference the estimate obtained holds for an equation satisfied in $(0,T)$ (for arbitrary $T>0$), but has no dependence on the data at $t=0$ and can therefore be extended to $[\tau, +\infty)$ for $\tau>0$.

	\begin{corollary}\label{gradBoundCor}
		Let $R, \tau>0$. The solution $u$ of \eqref{HJO}-\eqref{HJO-i} is Hölder-continuous of order $1/2$ in $B_R\times[\tau, +\infty)$ and $\|u\|_{C^{0,1/2}(B_R\times[\tau, +\infty))}\leq M$ for some $M>0$ depending on $R, \tau$, the data $u_0, f$ and universal constants. Moreover, this result holds with $\tau=0$ if $u_0$ is locally Lipschitz continuous in $\Omega$.
	\end{corollary}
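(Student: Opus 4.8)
\emph{Plan.} The idea is to upgrade the uniform spatial Lipschitz bound of Theorem~\ref{gradBound}\ref{gradBparab} to $1/2$-Hölder regularity in time by comparison with explicit parabolic barriers, and then to obtain the joint parabolic $C^{0,1/2}$ estimate by the triangle inequality. First I would collect the ingredients: applying Theorem~\ref{gradBound}\ref{gradBparab} with radius $R+1$ (admissible since $B_{R+2}\subset\RN$) gives a constant $L=L(R+1,\tau)$ with $|Du(x,t)|\le L$ for a.e.\ $x\in B_{R+1}$ and all $t\ge\tau$ (all $t\ge 0$ when $u_0$ is locally Lipschitz, in which case $u(\cdot,0)=u_0$ is itself $L$-Lipschitz on $B_{R+1}$). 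Write $C_f:=\sup_{\overline B_{R+1}}|f|<\infty$ and let $M_0$ be a bound for $|u|$ on $\overline B_{R+1}\times[\tau,+\infty)$; this quantity is finite on every finite time window, and globally finite in the situations in which the corollary is actually used (e.g.\ for $u-\lambda^*t$, by Lemma~\ref{lemma_unifBounded}), and the constant $M$ in the statement will depend on it.

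The core step is the Hölder bound in time. Fix $x_0\in B_R$ and $\tau\le s<t$ with $t-s\le 1$, and work on the parabolically open cylinder $Q_0=B_1(x_0)\times(s,t]$, whose parabolic boundary is $\big(\overline{B_1(x_0)}\times\{s\}\big)\cup\big(\partial B_1(x_0)\times[s,t]\big)$. Introduce the barriers
\begin{equation*}
\psi^\pm(x,r)=u(x_0,s)\pm\Big(L\,g(x,r)+K|x-x_0|^2+\gamma\,(r-s)\Big),\qquad g(x,r):=\big(|x-x_0|^2+\mu(r-s)\big)^{1/2},
\end{equation*}
with $\mu:=2N$, $K:=2M_0$ and $\gamma:=2NK+(L+2K)^m+C_f$. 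On $Q_0$ one has $g>0$, so $\psi^\pm$ is smooth there, and using $g_r=\mu/(2g)$, $\tfrac{N-1}{g}\le\Delta g\le\tfrac{N}{g}$ and $|Dg|\le 1$ one checks by a direct computation that the choice $\mu=2N$ makes $Lg_r$ absorb $-L\Delta g$, so that $\psi^+$ is a classical (hence viscosity) supersolution and $\psi^-$ a subsolution of \eqref{HJO} in $Q_0$ — the term $|D\psi^\pm|^m\ge 0$ being simply discarded for $\psi^+$ and bounded by $(L+2K)^m$ for $\psi^-$.

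On $\partial_pQ_0$ one has $\psi^-\le u\le\psi^+$: on the bottom $\{r=s\}$ this is precisely the spatial Lipschitz inequality $|u(\cdot,s)-u(x_0,s)|\le L|\cdot-x_0|$ (the quadratic term having the correct sign), and on the lateral part it follows from $K=2M_0$ together with $|u|\le M_0$. Comparison (Theorem~\ref{thm_comparison}, applied with $Q_0\subset\RN\times(0,t+1)$) then yields $\psi^-\le u\le\psi^+$ on $Q_0$; evaluating at $(x_0,t)$, where $g=\sqrt{\mu(t-s)}$, and using $t-s\le 1$ gives $|u(x_0,t)-u(x_0,s)|\le(L\sqrt{\mu}+\gamma)(t-s)^{1/2}$, while for $|t-s|\ge 1$ the crude bound $|u(x_0,t)-u(x_0,s)|\le 2M_0\le 2M_0|t-s|^{1/2}$ suffices. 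Interpolating through $(y,t)$ and combining with the spatial Lipschitz bound yields $|u(x,t)-u(y,s)|\le C(|x-y|+|t-s|^{1/2})$ for $x,y\in\overline B_R$, $s,t\ge\tau$, with $C$ depending only on $L,M_0,C_f,N,m$; together with $\sup|u|\le M_0$ this bounds $\|u\|_{C^{0,1/2}(B_R\times[\tau,+\infty))}$, and the same argument runs with $\tau=0$ when $u_0$ is locally Lipschitz.

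The main obstacle is verifying the sub-/supersolution property of the barriers near the corner $(x_0,s)$, where $g_r$ and $\Delta g$ both blow up like $1/g$; the whole construction hinges on the choice $\mu=2N$, which makes the time derivative win. A secondary, purely bookkeeping issue is that controlling $u$ on the lateral boundary of $Q_0$ forces the final constant to depend on the sup-norm $M_0$ of $u$ over the relevant cylinder — harmless given how the corollary is used, namely on finite time windows or for the time-globally-bounded function $u-\lambda^*t$.
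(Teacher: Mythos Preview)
Your argument is correct and is precisely an explicit, self-contained version of the second route the paper points to: it cites \cite{BBL} for the fact that a solution which is Lipschitz in $x$ is $1/2$-Hölder in $t$, once the gradient bound of Theorem~\ref{gradBound}\ref{gradBparab} makes $u_t-\Delta u=f-|Du|^m$ locally bounded; your barrier $\psi^\pm=u(x_0,s)\pm(Lg+K|x-x_0|^2+\gamma(r-s))$ with $\mu=2N$ is exactly the classical device behind that citation. The paper also mentions an alternative via interior parabolic Krylov--Safonov/Schauder estimates (Wang, Imbert--Silvestre), which would give some H\"older exponent $\alpha\in(0,1)$ rather than $1/2$; your route has the advantage of producing the sharp exponent and keeping the dependence of the constant transparent.

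Your caveat about the sup bound $M_0$ is well placed and worth keeping: the statement as written, with the full $C^{0,1/2}$ norm on $B_R\times[\tau,+\infty)$, tacitly presupposes such a bound, and both uses of the corollary in the paper (Proposition~\ref{thm_firstExt} on a finite time window with the monotone $u^R$'s bounded by $u^{2\bar R+1}$, and Step~2 of Theorem~\ref{thm_fullConv} for $v=u-\lambda^*t$ via Lemma~\ref{lemma_unifBounded}) provide it. So this is not a gap in your proof but a clarification of what the constant $M$ really depends on.
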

	
	\begin{proof} The proof of Corollary~\ref{gradBoundCor} can be done in two ways: either by using classical interior parabolic estimates (see \cite{wang1992regularity}, Theorem 4.19, and also Theorem 4.36 in \cite{imbert2013introduction}), in which case the Hölder-regularity of the solution $u$ is of some order $\alpha\in (0,1)$ depending on universal constants, or by the argument of \cite{BBL}, which implies that a solution which is Lipschitz in $x$ is $1/2$-Hölder-continuous in $t$.
	
	In both proofs, the result relies on part \ref{gradBparab} of Theorem \ref{gradBound}, which implies that the solution $u$ of \eqref{vhj_whole} satisfies $|Du|\leq L$ in $B_{R}\times (\tau, +\infty)$, and therefore that $u_t - \Delta u $ is bounded in $B_{R}\times (\tau, +\infty)$. This allows the use of either of the two arguments just mentioned.
	\end{proof}

\begin{lemma}[Strong Maximum Principle]\label{strongMaxP}
	Let $R,C>0$. Any upper semicontinuous subsolution of
		\begin{equation}\label{strongMaxP_eq}
			u_t - \Delta u - C|Du| = 0 \quad\text{in } B_R \times (0, +\infty)
		\end{equation}
	that attains its maximum at some $(x_0, t_0)\in B_R \times (0, +\infty)$ is constant in $B_R \times [0, t_0]$.
\end{lemma}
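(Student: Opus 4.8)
The plan is to reduce the statement to a strong minimum principle for nonnegative supersolutions of the uniformly parabolic equation $w_t-\Delta w+C|Dw|=0$, and to obtain that from the weak Harnack inequality; I will also indicate the more classical route via a Hopf-type barrier.

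\emph{Reduction.} Set $M:=\sup u=u(x_0,t_0)$, so that $u\le M$ on $B_R\times[0,+\infty)$, and let $w:=M-u\ge0$, which is lower semicontinuous with $w(x_0,t_0)=0$. Since $|Dw|=|Du|$, $w$ is a viscosity supersolution of $w_t-\Delta w+C|Dw|=0$ in $B_R\times(0,+\infty)$. It suffices to prove $w\equiv0$ on $B_R\times(0,t_0)$: upper semicontinuity of $u$ then forces, for each $x\in B_R$, $u(x,0)\ge\limsup_{s\downarrow0}u(x,s)=M$ and $u(x,t_0)\ge\limsup_{s\uparrow t_0}u(x,s)=M$, hence $u\equiv M$ on all of $B_R\times[0,t_0]$.

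\emph{Harnack route.} The equation for $w$ is uniformly parabolic, with a Lipschitz first-order term and no zeroth-order term, so nonnegative viscosity supersolutions satisfy the weak Harnack inequality (see \cite{imbert2013introduction}, \cite{wang1992regularity}, and their extensions covering bounded first-order terms): for suitable cylinders $Q^-$ and $Q^+$, with $Q^+$ lying at strictly later times than $Q^-$ and both compactly contained in $B_R\times(0,+\infty)$, one has $\big(|Q^-|^{-1}\int_{Q^-}w^{p_0}\big)^{1/p_0}\le C\,\inf_{Q^+}w$ for some $p_0>0$. Choosing $Q^+$ to be a small cylinder with $(x_0,t_0)$ in its interior — possible since $t_0>0$ and $x_0\in B_R$ — we get $\inf_{Q^+}w\le w(x_0,t_0)=0$, so $\int_{Q^-}w^{p_0}=0$, i.e.\ $w=0$ a.e.\ on $Q^-$; as $w$ is lower semicontinuous and nonnegative, this yields $w\equiv0$ on $\overline{Q^-}$. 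A finite Harnack chain of such cylinders, working backward in time and expanding spatially, propagates $w\equiv0$ over $B_{R'}\times(0,t_0-\eta)$ for every $R'<R$ and $\eta>0$, hence over all of $B_R\times(0,t_0)$.

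\emph{Barrier route and the main difficulty.} Alternatively one runs the classical Nirenberg argument: assuming $w>0$ somewhere in $B_R\times(0,t_0)$, select among the backward paraboloids $\{|x-x^\ast|^2<\mu^2-\nu(t^\ast-t),\ t<t^\ast\}$ a maximal one contained in $\{w>0\}$ whose closure meets $\{w=0\}$ at a point $p^\ast$ on its curved part (the geometric point that the touching may be taken there is standard); on a thin parabolic shell adjacent to $p^\ast$ the function $h(x,t)=e^{-a(|x-x^\ast|^2+\nu(t^\ast-t))}-e^{-a\mu^2}$ is, for $a$ large, a classical strict subsolution of $w_t-\Delta w+C|Dw|=0$, vanishes on the outer paraboloid, and is positive inside; comparing $w$ with $\varepsilon h$ on the shell (using $w\ge\delta>\varepsilon h$ on its inner parabolic boundary and $\varepsilon h=0\le w$ on the outer one) gives $\varepsilon h\le w$ there, and since $\varepsilon h<0\le w$ outside the paraboloid, $\varepsilon h$ touches $w$ from below at $p^\ast$ over a full neighborhood, which contradicts the strict sign of the barrier through the viscosity supersolution test. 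The main obstacle, and the reason the statement is formulated for viscosity rather than classical subsolutions, is exactly this geometric bookkeeping: one cannot differentiate $w$ where $w-\varepsilon h$ vanishes and must instead ensure that the smooth barrier genuinely touches $w$ from the correct side over a full space-time neighborhood, which is what the paraboloid geometry secures. We note that in the one place where this lemma is invoked (Step~2 of the proof of Theorem~\ref{thm_fullConv}) the relevant function is $C^{2,1}$ by Schauder theory, so there the lemma reduces to the classical parabolic strong maximum principle.
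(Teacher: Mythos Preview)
The paper does not give a proof of this lemma at all: immediately after the statement it simply refers the reader to \cite{da2004remarks}, Corollary~2.4, and \cite{bardi1999strong}, Corollary~1, remarking that the method of the latter (stated for elliptic equations) carries over to the parabolic setting. Your proposal therefore supplies far more than the paper does.

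Your argument is correct. The reduction $w=M-u$ is clean, and the passage from $w\equiv0$ on $B_R\times(0,t_0)$ to the closed slab via upper semicontinuity of $u$ is handled properly. Of your two routes, the Hopf--Nirenberg barrier construction is precisely the method underlying the references the paper cites (Bardi--Da Lio adapt exactly this kind of argument to viscosity sub/supersolutions), so that route is in line with what the authors implicitly invoke. Your Harnack-chain alternative is also valid; it calls on somewhat heavier machinery (the Krylov--Safonov/Wang weak Harnack inequality for viscosity supersolutions with bounded first-order terms) but has the virtue of being completely soft once that inequality is available, and the chaining you describe---moving backward in time and laterally in space by shrinking the cylinder radius so that the total temporal drop stays below $t_0-s$---does reach every point of $B_R\times(0,t_0)$. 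Your closing observation, that in the one place the paper uses this lemma (Step~2 of Theorem~\ref{thm_fullConv}) the relevant function is $C^{2,1}$ by interior regularity and hence the classical strong maximum principle already suffices, is accurate and worth recording.
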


We refer the reader to, e.g., \cite{da2004remarks}, Corollary 2.4 and \cite{bardi1999strong}, Corollary 1. (The latter result concerns time-independent equations, but the method of proof equally applies to this context.)

\begin{remark}\label{strongMaxP_rmk}
	The difference $w-\phi$, where $\phi$ is a solution of the ergodic equation \eqref{erg_prob_intro} and $w$ is given by \eqref{defW}, can be shown to satisfy an equation like \eqref{strongMaxP_eq} by using the convexity of $\xi\to|\xi|^m$ in any ball $B_R$. In this case, the constant $C$ in \eqref{strongMaxP_eq} depends on the gradient bound for $\phi$ from \emph{Theorem \ref{gradBound}, \ref{gradBerg}}. Of course, the complete result is obtained by letting $R$ tend to $+\infty$.
\end{remark}

\noindent\textbf{Acknowledgments:} G.B.~was partially supported by the ANR MFG
(ANR-16-CE40-0015-01). A.Q.~was partially supported by Fondecyt Grant Nº 1190282 and Programa Basal, CMM, U.~de Chile. A.R.~was partially supported by Fondecyt, Postdoctorado 2019, Proyecto Nº 3190858.

\bibliographystyle{plain}
\bibliography{aerp.bib}

\vspace{5mm}
\noindent\textsc{Guy Barles}\\
\noindent\textit{Email:} guy.barles@idpoisson.fr\\
\noindent\textsc {Institut Denis Poisson (UMR CNRS 7013), Université de Tours, Université d'Orléans, CNRS, Parc de Grandmont 37200 Tours, France}\\[4pt]

\noindent \textsc{Alexander Quaas}\\
\noindent \textit{Email:} alexander.quaas@usm.cl\\
\noindent \textsc{Departamento de Matemática, Universidad Técnica Federico Santa María, Casilla V-110, Avda.~España 1680, Valparaíso, Chile.}\\[4pt]

\noindent \textsc{Andrei Rodríguez}\\
\noindent \textit{Email:} andrei.rodriguez@usach.cl\\
\noindent \textsc{Departamento de Matemática y Ciencia de la Computación, Universidad de Santiago de Chile, Avda.~Libertador General Bernardo O'Higgins 3383, Santiago, Chile.}
\end{document}